\newtheorem{theorem}{Theorem}
\newtheorem{lemma}{Lemma}
\newtheorem{remark}{Remark}
\newcommand{\R}{{\mathbb R}}
\newcommand{\Z}{{\mathbb Z}}
\newcommand{\N}{{\mathbb N}}
\newcommand{\ch}{{\bf 1}}
\newcommand{\ol}[1]{\overline{#1}}
\newcommand{\lr}[1]{\lfloor#1\rfloor}
\newcommand{\eqd}{\overset{\footnotesize{d}}{=}}
\title{An Empirical Process Interpretation of a Model of Species Survival}
\author{Iddo Ben-Ari\footnote{This work was partially supported by a grant from the Simons Foundation (\#208728 to Iddo Ben-Ari)} \\~  Department of Mathematics\\ University of Connecticut\\ 196 Auditorium Rd \\ Storrs, CT 06269-3009\\~\\
\url{iddo.ben-ari@uconn.edu}}
\begin{document}
\maketitle
\begin{abstract}
We study  a model of species survival recently proposed by Michael and Volkov. We interpret it as a variant of  empirical processes, in which the sample size is random and  when decreasing, samples of smallest numerical values are removed. Micheal and Volkov proved that the empirical distributions converge to the sample distribution conditioned not to be below a certain threshold.  We prove a functional central limit theorem for the fluctuations. There exists a threshold above which the  limit process is Gaussian with variance bounded below by a positive constant,  while at the threshold it is half-Gaussian. 
 \end{abstract}
 
\section{Introduction and Statement of Results}
We study a generalization of the Guiol Machado Schinazi (GMS) model \cite{GMS} \cite{BMR} \cite{GMS_2} that was recently proposed and analyzed  by Michael and Volkov \cite{myvol}. \\
 
 The model could be  viewed as describing an ecosystem whose population size is given by a simple  Markov chain on $\Z_+$.  Each member of the ecosystem has a random  ``fitness" assigned at birth. When the population size decreases, the ``least fit" members are eliminated.  The population size process is driven by an IID sequence of $\Z$-valued random variables, $(I_n:n\in\N)$.  Starting with population size equal to  $0$ at time $n=0$, at each time $n\in\N$, the population increases by $I_n$ if $I_n\ge0$, or decreases by the  minimum between the present population size and $|I_n|$ if $I_n <0$. In the original  GMS model, $I_n \in \{-1,1\}$, that is the population size is  modeled by a birth and death chain. \\ 
 
 When $I_n \equiv 1$ for all $n$,  the ecosystem at time $n$ consists of $n$ IID samples from a $U[0,1]$ distribution, hence the immediate connection to empirical processes. In this model there are two additional ingredients. The first  is the randomness of the sample size. This is not new, e.g. Pyke \cite{pyke} and also, in a closely related context, Asmussen \cite{asmussen}. The  former paper studies empirical processes in which the sample size is random and obeys a law of large numbers with positive speed. In the latter, among other things, the author proves a scaling limit for empirical distributions  corresponding to  sample size given by an increasing sequence of stopping times which are  infinite with positive probability, conditioned to be finite. The second ingredient of the present model, and which appears to be new, is the mechanism according to which samples are discarded when the sample size decreases.  This mechanism is responsible for criticality : The empirical distribution converges to the sample distribution conditioned not to drop below a certain threshold, in contrast to the classical Glivenko-Cantelli theorem, where the empirical distributions converge to the sample distribution.  A result of the same spirit holds in \cite{asmussen}, due the conditioning. Furthermore, as we will show below, fluctuations from this distribution scale to a process which is discontinuous  at  the critical threshold. The  process is Gaussian except at the critical threshold, where it is  half-normal  (the absolute value of a centered normal).  This deviates from the ``classical" Brownian Bridge scaling for empirical processes (e.g. \cite[Theorem 14.3]{Billin}), which is also the scaling limit in \cite{pyke} and \cite{asmussen}. \\
   
 We turn to a formal description of the model. Let $I,I_1,\dots$ be an IID sequence of $\Z$-valued random variables. We define the population size process $X$ by letting   
$$ X_0:=0,~ X_{n+1} : = X_n + \max (I_{n+1}, -X_n),~n\in\Z_+. $$ 
This inductive formula gives the  waiting time of the $n+1$-th customer  in a G/G/1 queue, with $I_{n+1}$ interpreted as  the difference between the  service time of $n$-th customer and interarrival time between $n$-th and $n+1$-th customer. However  the main object of interest in the present model is the additional and intrinsic fitness structure, which does not translate naturally into queuing theory. \\
 
 For  $f\in [0,1]$ and $n\in \Z_+$, let $L_n (f)$ denote the number of members of the population at time $n$  whose fitness does not exceed $f$,  and write $L(f):=(L_n (f):n\in \Z_+)$ for the corresponding process.  Here is an explicit construction. For $n\in \Z_+$, let $S_{n,+} :=\sum_{0<j\le n} (I_j)_+$ and similarly, $S_{n,-}:=\sum_{0<j\le n} (I_j)_-$, where we here and henceforth we  convene that  summation over an empty index set has sum  $0$, and  for a  real number $x$, we define   $x_+:=\max(x,0),~x_-:=(-x)_+ = -\min(x,0)$.  Let $U,U_1,\dots$ be an IID sequence sampled from a $U[0,1]$ distribution.  For $f\in [0,1]$ and $n\in\Z_+$, let  
$$  C_{n+1}  (f) := \sum_{S_{n,+} <  j\le   S_{{n+1},+}}\ch_{[0,f]}(U_j),$$
 that is, $C_{n+1}(f)$ represents the number of members of population born at time  $n+1$ and whose fitness does not exceed $f$.  As in the construction of $X$, we let 
\begin{equation} 
  \label{eq:Ln} 
  L_0(f) :=0,~L_{n+1}(f) := L_n(f) + \max (C_{n+1}(f) - (I_{n+1})_-, - L_n (f)).
\end{equation}  
  Since $C_{n+1}(f)$ and $(I_{n+1})_-$ are independent of $L_0,\dots,L_n$, it follows that for each fixed $f$, $L(f)$ is a $\Z_+$-valued Markov chain. Note that $L_n (1)=X_n$. \\
  
  We now provide an alternative construction which will be frequently utilized in the proofs. For $f \in [0,1]$, let $S(f):=(S_n (f):n\in \Z_+)$ denote the  process consisting of the partial sums of IID random variables : 
   $$ S_n (f) : = \sum_{0<j\le n} (C_j (f) - (I_j)_-)=\sum_{0<j\le S_{n,+}} \ch_{[0,f]}(U_j)-S_{n,-},~n\in \Z_+. $$ 
  We also define the corresponding sequence of running minima, 
   $$ M_0(f) := 0,~M_{n+1}(f) = \min (M_n(f),S_{n+1}(f)),~n\in \Z_+.$$ 
The following identity is well-known, and we provide a proof only for  convenience : 
\begin{lemma} 
\label{lem:LS}
For all $f \in [0,1]$ and $n\in\Z_+$, 
  $L_n(f) = S_n (f)- M_n(f)$  
\end{lemma}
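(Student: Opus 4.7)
The plan is to recognize the recursion for $L$ as the familiar Lindley (reflected random walk) recursion and then prove the identity by a direct induction on $n$.

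First I would rewrite the increment. Setting $\xi_{n+1}:=C_{n+1}(f)-(I_{n+1})_-$, the defining recursion \eqref{eq:Ln} becomes
\[
L_{n+1}(f) = L_n(f) + \max(\xi_{n+1},-L_n(f)) = \max\bigl(L_n(f)+\xi_{n+1},\,0\bigr),
\]
since $a+\max(b,-a)=\max(a+b,0)$. Also, by definition, $S_{n+1}(f)-S_n(f)=\xi_{n+1}$, and $M_{n+1}(f)=\min(M_n(f),S_{n+1}(f))$. These reformulations reduce the claim to the well-known identity that a random walk reflected at $0$ equals the walk minus its running minimum.

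The induction is straightforward. The base case $n=0$ is immediate from $L_0(f)=S_0(f)=M_0(f)=0$. For the inductive step, assuming $L_n(f)=S_n(f)-M_n(f)$, I substitute into the reformulated recursion to get
\[
L_{n+1}(f) = \max\bigl(S_n(f)-M_n(f)+\xi_{n+1},\,0\bigr)=\max\bigl(S_{n+1}(f)-M_n(f),\,0\bigr).
\]
Then I would split into two cases according to the sign of $S_{n+1}(f)-M_n(f)$. If $S_{n+1}(f)\ge M_n(f)$, then $M_{n+1}(f)=M_n(f)$ and the right-hand side equals $S_{n+1}(f)-M_{n+1}(f)\ge 0$. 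If $S_{n+1}(f)<M_n(f)$, then $M_{n+1}(f)=S_{n+1}(f)$, so $S_{n+1}(f)-M_{n+1}(f)=0$, matching the $\max$. In both cases $L_{n+1}(f)=S_{n+1}(f)-M_{n+1}(f)$, which closes the induction.

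There is no real obstacle here; the only care needed is the algebraic rewriting $a+\max(b,-a)=\max(a+b,0)$, which turns the asymmetric ``reflection at $0$'' in \eqref{eq:Ln} into the standard Lindley form, after which the case analysis on the sign of $S_{n+1}(f)-M_n(f)$ completes the argument.
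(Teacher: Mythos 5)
Your proof is correct and takes essentially the same route as the paper: a direct induction on $n$ using the recursion \eqref{eq:Ln}. The only cosmetic difference is that you first rewrite the step as the Lindley form $\max(L_n(f)+\xi_{n+1},0)$ and finish with a case split on the sign of $S_{n+1}(f)-M_n(f)$, whereas the paper reaches the same conclusion by a chain of $\min$-identities without splitting into cases.
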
 
Thus, the family of processes $(L(f):f \in [0,1])$ consists of coupled reflected random walks. 
\begin{proof}
      The claim is clearly true for $n=0$. We continue by induction,  
   \begin{align*}
    L_{n+1} (f) &= S_{n} (f)-M_{n} (f) -\min  (-(C_{n+1}(f) - (I_{n+1})_-), S_n(f) - M_n (f)) \\
     & = S_n (f) - \min ( -(C_{n+1}(f) - (I_{n+1})_-)+M_n(f),S_n (f))\\
      & = S_n(f) +(C_{n+1}(f) - (I_{n+1})_-) - \min (M_n(f),S_n(f) + C_{n+1}(f) -(I_{n+1})_-)\\
     & =S_{n+1}(f) - M_{n+1}(f).      
     \end{align*}
\end{proof} 


Since we are interested in scaling limit for the empirical distribution function of the fitnesses, we wish to  work under assumptions that guarantee $\lim_{n\to\infty} X_n = \infty$ a.s. This is why we make the following assumption :  
 \begin{equation}
 \label{eq:f_cexists} 
 E I_- < E I_+ \le \infty. 
 \end{equation}
 Under assumption \eqref{eq:f_cexists} we  define a critical threshold  for the fitness, $f_c$ : 
  $$ f_c : = \frac{E I_-}{E I_+} \in [0,1).$$
We will focus on behavior of the ecosystem, restricted to  the  fitness interval $[f_c,1]$, where a central limit theorem holds. As is not hard to show, for every $f<f_c$, $L(f)$  is positive recurrent and converges to its invariant distribution without centering and scaling. \\

To present our results, we define the empirical distribution function $\hat F_n$, 
 $$ \hat F_n (f) := \begin{cases} \frac{L_n (f)}{X_n} & X_n >0\\ 0; &  \mbox{otherwise.}\end{cases} $$ 
    We also let $F$ denote the distribution function of the $U[f_c,1]$ law. That is, $$F(f) = \frac{ (\min (f,1)-f_c)_+}{1-f_c},$$
    and let
    $$\hat \Delta_n := \hat F_n -F.$$ 
  
Our first result is an analog to the  Glivenko-Cantelli Theorem. 
  \begin{theorem} Assume condition \eqref{eq:f_cexists}. Then 
\label{th:glivenko}
  $$ \lim_{n\to\infty}   \|  \hat \Delta_n   \|_\infty=0,\mbox{ a.s.}$$ 
\end{theorem}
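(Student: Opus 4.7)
The plan is to combine a pointwise strong law for the processes $L_n(f) = S_n(f) - M_n(f)$ (Lemma \ref{lem:LS}) with the classical monotonicity argument of Glivenko-Cantelli, using continuity of the limit $F$.

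For the pointwise part, I normalize by $S_{n,+}$ rather than by $n$, making the argument insensitive to whether $EI_+$ is finite. The strong law of large numbers applied to $(I_j)_{\pm}$ gives $S_{n,+} \to \infty$ and $S_{n,-}/S_{n,+} \to f_c$ a.s. (the ratio tending to $0$ when $EI_+ = \infty$, which matches $f_c = 0$). Classical Glivenko-Cantelli for the IID $U[0,1]$ sample $(U_j)$, together with $S_{n,+} \to \infty$, gives
\begin{equation*}
  \sup_{f \in [0,1]} \Bigl| \frac{1}{S_{n,+}} \sum_{j=1}^{S_{n,+}} \ch_{[0,f]}(U_j) - f \Bigr| \to 0 \text{ a.s.},
\end{equation*}
which combined with the previous facts produces $\sup_{f \in [0,1]} | S_n(f)/S_{n,+} - (f - f_c) | \to 0$ a.s. Since $S_n(1)$ drifts to $+\infty$, its running minimum $M_n(1)$ is a.s. eventually constant, so $X_n/S_{n,+} = S_n(1)/S_{n,+} - M_n(1)/S_{n,+} \to 1 - f_c > 0$. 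For each fixed $f > f_c$ the same reasoning gives $M_n(f)/S_{n,+} \to 0$, hence $\hat F_n(f) = L_n(f)/X_n \to (f-f_c)/(1-f_c) = F(f)$ a.s.

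The critical level $f_c$ needs separate handling. If $f_c = 0$ then $C_j(0) = 0$ a.s., so $L_n(0) \equiv 0$ and $\hat F_n(0) = 0 = F(0)$. If $f_c > 0$, then $EI_+ < \infty$ and the increments of $S_n(f_c)$ are integrable with mean $f_c E I_+ - E I_- = 0$, so the strong law yields $S_n(f_c)/n \to 0$; the elementary observation that $s_n/n \to 0$ implies $\min_{k \le n} s_k / n \to 0$ then gives $M_n(f_c)/n \to 0$, hence $L_n(f_c)/n \to 0$. Combined with $X_n/n \to (1-f_c) EI_+ > 0$, this yields $\hat F_n(f_c) \to 0 = F(f_c)$.

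To pass from pointwise to uniform convergence, given $\epsilon > 0$ I use continuity of $F$ to choose a finite partition $f_c = f_0 < f_1 < \cdots < f_K = 1$ with $F(f_{k+1}) - F(f_k) < \epsilon$. On the intersection of the finitely many full-measure events above, $|\hat F_n(f_k) - F(f_k)| < \epsilon$ for all $k$ once $n$ is large. For any $f \in [f_k, f_{k+1}]$, monotonicity of both $\hat F_n$ and $F$ together with the pointwise estimates at the partition endpoints gives $|\hat F_n(f) - F(f)| < 2\epsilon$, so $\sup_{f \in [f_c, 1]} |\hat F_n(f) - F(f)| < 2\epsilon$. On $[0, f_c)$, monotonicity yields $\hat F_n(f) \le \hat F_n(f_c) \to 0 = F(f)$, so the sup there also vanishes. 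Letting $\epsilon \downarrow 0$ completes the argument. The most delicate step is the treatment at $f_c$, where $S_n(f_c)$ is centered and one must invoke the $o(n)$ bound on its running minimum; everything else is a direct application of strong laws.
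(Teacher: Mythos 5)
Your proof is correct, and it is organized differently from the paper's. The paper works directly with the uniform norm of the decomposition
$\hat\Delta_n = \frac{S_{n,+}}{X_n}\hat Q_{S_{n,+}} + \frac{f_cS_{n,+}-S_{n,-}}{X_n} + (f-f_c)\bigl(\frac{S_{n,+}}{X_n}-\frac{1}{1-f_c}\bigr) - \frac{(f-f_c)_-}{1-f_c} - \frac{M_n(f)}{X_n}$,
killing each term uniformly: the empirical-process term by Glivenko--Cantelli, the drift terms by the law of large numbers, and the reflection term via the monotone bound $\sup_{f\in[f_c,1]}|M_n(f)|\le |M_n(f_c)|$ together with $M_n(f_c)=o(n)$ (its Lemma \ref{lem:limits}); the subcritical interval and the case $EI_+=\infty$ are then handled separately. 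You instead establish pointwise a.s. convergence $\hat F_n(f)\to F(f)$ at each $f$ (distinguishing $f>f_c$, where $M_n(f)$ is eventually constant, from $f=f_c$, where you use the same $o(n)$ bound on the running minimum of the centered walk) and then upgrade to uniformity by the classical P\'olya-type discretization exploiting monotonicity of $\hat F_n$ and continuity of $F$. This costs you the extra (routine) discretization step but buys two things: you never need the uniform-in-$f$ control of $M_n(f)$, and your normalization by $S_{n,+}$ rather than by $n$ treats $EI_+<\infty$ and $EI_+=\infty$ in one stroke, since $S_{n,-}/S_{n,+}\to f_c$ in both cases, whereas the paper must run the argument twice. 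All the individual steps you invoke (a.s. convergence of the Glivenko--Cantelli supremum along the random divergent index $S_{n,+}$, transience of $S_n(f)$ for $f>f_c$, the elementary fact that $s_n/n\to0$ forces $\min_{k\le n}s_k/n\to0$, and positivity of the limit of $X_n/S_{n,+}$) are sound, so I see no gap.
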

This result follows directly  from the work of  Michael and Volkov  and is    equivalent  to \cite[Theorem 4-(b)]{myvol}.  The result in  \cite{myvol} is  stated in terms of coupling between the ecosystem and an IID sample drawn from a $U[f_c,1]$ distribution. We provide an alternative proof based on the Glivenko-Cantelli Theorem and the law of large numbers. \\

We turn to the Functional Central Limit Theorem. For the remainder of the section we impose the moment condition 
   \begin{equation} 
   \label{eq:mom} 
    E (I^2)<\infty. 
    \end{equation} 
    We first introduce the processes which appear in the statement of our result. Let $W_1,W_2$ be independent copies of standard Brownian motion,  and let $\mbox{Br}_1$ denote the Brownian bridge associated with $W_1$. That is,  
   $$\mbox{Br}_1 (f) : = W_1 (f) - f W_1 (1),~f \in [0,1].$$ 
   We also define the process  $\tilde W_1$   on $[0,1]$ as follows. When $f_c=0$ (equivalently, $EI_-=0$),  we let  $\tilde W_1\equiv 0$. Otherwise,  let $U\sim   U[0,1]$ be independent of $W_1$ and $W_2$.  
    For $t \in [0,1)$, let $A_t:= ([0,t]+U) \mod 1$, and let $\tilde A_t := f_c + (1-f_c)A_t$. Define 
    $$ \tilde W_1(t) :=  \frac {1}{\sqrt{f_c (1-f_c)}}  \left ( (1-f_c) W_1(f_c t) - f_c \int  \ch_{\tilde A_t}(s)d W_1 (s)\right),~t \in[0,1], $$
     and  
      $$  Y_\infty(t):=\tilde \sigma_1   \tilde W_1(t)+ \sigma_2 W_2 (t),~t\in [0,1].$$ 
       where 
       $$\tilde \sigma_1 := \sqrt{f_c(1-f_c) E I_+},\mbox{ and } \sigma_2 := \sigma ( f_c I_+ - I_-) = \sqrt{f_c^2 \sigma^2(I_+) + 2 f_c E I_+ E  I_- + \sigma^2(I_-)},$$
        and where here and henceforth,  $\sigma(Z)$ denotes the  standard deviation of the square integrable  random variable $Z$. We record the following. 
        \begin{lemma}  
Assume that conditions \eqref{eq:f_cexists},\eqref{eq:mom} hold, and that $f_c>0$. 
\label{lem:W4BM} 
 Then $\tilde W_1$ is standard Brownian motion on $[0,1]$. 
\end{lemma}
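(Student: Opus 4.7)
The plan is to verify the three properties characterizing standard Brownian motion on $[0,1]$: centered Gaussian finite-dimensional distributions, covariance $\mbox{Cov}(\tilde W_1(s), \tilde W_1(t)) = \min(s,t)$, and almost-sure continuity of sample paths. The whole argument proceeds by conditioning on $U$, since once $U$ is frozen, $\tilde W_1$ becomes a deterministic linear functional of $W_1$.

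For the finite-dimensional distributions, given $U$ both components of $\tilde W_1(t)$ are Wiener integrals: $W_1(f_c t) = \int \ch_{[0, f_c t]}(s)\, dW_1(s)$ with integrand supported in $[0, f_c]$, and $\int \ch_{\tilde A_t}(s)\, dW_1(s)$ with integrand supported in the (now deterministic) set $\tilde A_t \subset [f_c, 1]$ of Lebesgue measure $(1-f_c) t$. Hence conditional on $U$, $(\tilde W_1(t_1), \ldots, \tilde W_1(t_k))$ is jointly centered Gaussian, and the two components of $\tilde W_1(t)$ are independent (disjoint supports). By Ito isometry,
$$ \mbox{Cov}(\tilde W_1(s), \tilde W_1(t) \mid U) = \frac{(1-f_c)^2 f_c \min(s,t) + f_c^2 |\tilde A_s \cap \tilde A_t|}{f_c(1-f_c)}. $$
The crucial observation is that for $s \le t$, the arcs $A_s$ and $A_t$ on the circle $\R/\Z$ share the same starting point $U$ and have lengths $s$ and $t$, so $A_s \subset A_t$ and therefore $|\tilde A_s \cap \tilde A_t| = (1-f_c) s$. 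Substitution yields $\mbox{Cov}(\tilde W_1(s), \tilde W_1(t) \mid U) = s = \min(s,t)$, independently of $U$, so the unconditional finite-dimensional distributions are those of standard Brownian motion.

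For almost-sure path continuity, I would write $A_t$ explicitly as $[U, U+t]$ if $U + t \le 1$ and as $[U, 1] \cup [0, U+t-1]$ otherwise. In either case $\int \ch_{\tilde A_t} dW_1$ is a sum of differences of values of $W_1$ at points depending continuously on $t$, with the two cases matching at $t = 1 - U$, so the integral is a.s. a continuous function of $t$. Since $t \mapsto W_1(f_c t)$ is trivially continuous a.s., so is $\tilde W_1$. The only point requiring care is the wrap-around in the definition of $A_t$, which must be tracked in both the inclusion $A_s \subset A_t$ (clear from the arc picture) and in the continuity step. I do not expect any serious obstacle beyond this bookkeeping.
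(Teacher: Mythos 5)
Your proposal is correct and follows essentially the same route as the paper: condition on $U$, use $|\tilde A_t| = (1-f_c)t$ and the nesting $\tilde A_s \subset \tilde A_t$ to identify the conditional finite-dimensional distributions with those of $W_1$, then invoke path continuity. You simply spell out the covariance computation and the wrap-around continuity check that the paper leaves as "an immediate consequence," and both check out.
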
 
\begin{proof}
 Observe that $|\tilde A_t|=t(1-f_c)$, independently of $U$, and that $\tilde A_s \subset \tilde A_t$ if $s<t$.  As an immediate consequence,  the finite-dimensional distributions of $\tilde W_1$ conditioned on $U$ coincide with those of $W_1$, and since $\tilde W_1$ is a continuous processes, the result follows. 
  \end{proof}
For  a compact interval $I$, we denote by $D(I)$  the Skorohod space of  real-valued cadlag functions on the compact interval $I$,  equipped with the $J_1$-topology.   Let $\Psi : D ([0,1])\to \R$ be the function 
 $$\Psi (x) :=x(1) - \inf_{0\le t \le 1} x(t).$$
Here is our main result. 
 \begin{theorem}
 \label{th:CLT}  
Assume that \eqref{eq:f_cexists} and \eqref{eq:mom} hold. Then for any $f_0 \in (f_c ,1]$ we have   
  $$\sqrt{n} \left (   \hat {\Delta}_n |_{[f_0,1]},\hat F_n (f_c) \right)\Rightarrow \frac{1}{E I_+} 
  \left ( \sqrt{E I_+} \mbox{Br}_1 + (1-F)  \sigma_2 W_2(1), \Psi(Y_\infty)\right)$$ 
  in $D([f_0,1])\times \R$. 
 \end{theorem}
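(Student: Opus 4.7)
The plan is to combine Lemma \ref{lem:LS} with a joint functional CLT for the random walks $S_n(\cdot)$ and their running minima $M_n(\cdot)$ at the indices $f \in [f_0, 1] \cup \{f_c\}$, then treat the two coordinates of the theorem separately. First I would verify that on $[f_0, 1]$ the reflection term is negligible. Since $f \mapsto S_k(f)$ is non-decreasing for each $k,\omega$, the same monotonicity holds for $M_n$, so $\sup_{f \in [f_0, 1]} |M_n(f)| = |M_n(f_0)|$. Because $S_n(f_0)$ is a random walk with strictly positive drift $(f_0 - f_c) EI_+$, the running minimum $M_n(f_0)$ converges a.s.\ to a finite value and is therefore $O(1)$ uniformly; by Lemma \ref{lem:LS}, $L_n(f) = S_n(f) + O(1)$ uniformly in $f \in [f_0, 1]$. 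Separately, the LLN gives $X_n/n \to EI_+(1-f_c)$ a.s.

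Next, using $f - F(f) = f_c(1-f)/(1-f_c)$ for $f \ge f_c$, I would decompose, with $N_n := S_{n,+}$,
\[
S_n(f) - F(f)\, S_n(1) = \underbrace{\sum_{j=1}^{N_n} \bigl[\ch_{[0,f]}(U_j) - f\bigr]}_{=: A_n(f)} \;+\; (1 - F(f))\, \underbrace{\bigl[ f_c S_{n,+} - S_{n,-}\bigr]}_{=: B_n}.
\]
The process $A_n/\sqrt{n}$ converges in $D([0,1])$ to $\sqrt{EI_+}\,\mbox{Br}_1$ by Donsker's theorem applied to the empirical process of the $U_j$'s, combined with a random-time-change argument using $N_n/n \to EI_+$. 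The scalar $B_n/\sqrt{n}$ converges to $\sigma_2 W_2(1)$ by the CLT for the IID, mean-zero variables $f_c (I_k)_+ - (I_k)_-$ of variance $\sigma_2^2$; more generally the FCLT gives $B_{\lfloor n \cdot \rfloor}/\sqrt{n} \Rightarrow \sigma_2 W_2$ in $D([0,1])$. Since $A_n$ depends on the $U_j$'s and $B_n$ only on the $I_k$'s, these two limits are jointly convergent, and dividing by $X_n/n$ yields the first coordinate of the claimed limit restricted to $[f_0, 1]$.

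For the second coordinate, the key FCLT is $S_{\lfloor n \cdot \rfloor}(f_c)/\sqrt{n} \Rightarrow Y_\infty$ in $D([0,1])$, obtained from Donsker applied to the IID sum $S_n(f_c) = \sum_{k \le n}(C_k(f_c) - (I_k)_-)$, whose step variance equals $\tilde\sigma_1^2 + \sigma_2^2$. Splitting each step as $(C_k(f_c) - f_c (I_k)_+) + (f_c (I_k)_+ - (I_k)_-)$ and observing that the first piece has mean zero given $I_k$ (hence is uncorrelated with the second) identifies the Gaussian limit with $\tilde\sigma_1 \tilde W_1 + \sigma_2 W_2$, with $W_2$ the same Brownian motion as above. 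Continuous mapping applied to the functional $\Psi$ (which is continuous in the $J_1$ topology at paths that attain their infimum uniquely, generically true for $Y_\infty$) then gives
\[
L_n(f_c)/\sqrt{n} \;=\; S_n(f_c)/\sqrt{n} - \min_{k \le n} S_k(f_c)/\sqrt{n} \;\Rightarrow\; \Psi(Y_\infty),
\]
and division by $X_n/n$ yields the second coordinate; Slutsky combines the two.

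The main obstacle is the joint identification of $\tilde W_1$ with $\mbox{Br}_1$. Both encode the $U$-empirical process, but at different scales: $\mbox{Br}_1$ captures the large-$n$ shape of $\sqrt{N_n}[F_{N_n}(\cdot) - \cdot]$ in the spatial variable $f$, while $\tilde W_1(t)$ arises as the scaling limit of the running count $V_{N_{\lfloor nt \rfloor}}(f_c)/(\tilde\sigma_1 \sqrt{n})$, where $V_M(f_c) := \sum_{j \le M}[\ch_{[0,f_c]}(U_j) - f_c]$, in the time variable $t$. Their pointwise relation at $t = 1$ is forced by $\tilde W_1(1) = \mbox{Br}_1(f_c)/\sqrt{f_c(1-f_c)}$, which matches the definition of $\tilde W_1$ and is consistent with Lemma \ref{lem:W4BM}. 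However, along intermediate $t \in (0,1)$ the process $\tilde W_1$ is not measurable with respect to $W_1$ alone, which is precisely why an independent uniform $U$ enters the construction in the theorem. Verifying this coupling---equivalently, showing joint weak convergence of $(A_n/\sqrt{n}, V_{N_{\lfloor n\cdot\rfloor}}(f_c)/\sqrt{n})$ to $(\sqrt{EI_+}\,\mbox{Br}_1, \tilde\sigma_1 \tilde W_1)$ in $D([0,1])^2$---is the technical heart of the proof, and it is where the rotation-by-$U$ gadget in the definition of $\tilde W_1$ plays its role.
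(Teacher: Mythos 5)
Your outline follows the same architecture as the paper's proof: the same algebraic decomposition (your $S_n(f)-F(f)S_n(1)=A_n(f)+(1-F(f))B_n$ is exactly the paper's $T(\ol{S}_n)(f)=\ol{S}_n(f)+\frac{f_c-f}{1-f_c}\ol{S}_n(1)$), the same argument that the reflection term is $O(1)$ uniformly on $[f_0,1]$ via the positive drift of $S_n(f_0)$, the same use of $\Psi$ applied to $Y_n(t)=S_{\lr{nt}}(f_c)$ for the second coordinate, and the correct variance computations identifying $\tilde\sigma_1$ and $\sigma_2$. However, you explicitly leave unproven what you yourself call ``the technical heart'': the \emph{joint} convergence of the spatial empirical process $A_n(\cdot)/\sqrt{n}$ and the time-indexed process $Y_n(\cdot)/\sqrt{n}$, with the limit coupled precisely so that $\mbox{Br}_1$, $\tilde W_1$ and $W_2$ fit together as in the statement. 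Since the whole point of the theorem is the joint law of the two coordinates (the marginal statements are comparatively routine), this is a genuine gap, not a deferred technicality. A second, smaller imprecision: $A_n$ is \emph{not} independent of $B_n$ at finite $n$, because $A_n$ is a sum over the random index $N_n=S_{n,+}$, which is $\sigma(I_1,\dots,I_n)$-measurable; the asymptotic independence of the limits must be argued (the paper does so by conditioning on $\{S_{n,+}=k\}$ in the characteristic-function computation).

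For comparison, the paper fills the missing step as follows. It proves tightness of $\ol{S}_n|_{[f_0,1]}/\sqrt{n}$ in $D([f_0,1])$ (Lemma \ref{lem:tight}), invokes Donsker for $Y_n/\sqrt{n}$, and reduces joint weak convergence in the product space to convergence of finite-dimensional distributions (Lemma \ref{lem:product}). The finite-dimensional convergence (Lemma \ref{lem:findim}) is obtained by splitting time into blocks $[\lr{nt_{l-1}},\lr{nt_l}]$, over which the vector of increments $(\Delta_{n,l}(f_c),\Delta_{n,l}(f_1),\dots,\Delta_{n,l}(f_N))$ is a sum of IID copies of an explicit square-integrable vector $V$, so the multivariate CLT gives a Gaussian limit with covariance $E(V(j)V(k))$. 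The coupling question is then settled purely at the level of covariances: Lemma \ref{lem:covariance} verifies that the candidate limit $(X_\infty,Y_\infty)$ built from $\mbox{Br}_1$, $\tilde W_1$, $W_2'$, $W_3'$ reproduces exactly these covariances, and it is in the computation of $E(\mbox{Br}_1(f)\tilde W_1(t))=f_c(1-f)t/\sqrt{f_c(1-f_c)}\cdot\sqrt{f_c(1-f_c)}$ that the rotating-window set $\tilde A_t$ (your ``rotation-by-$U$ gadget'') earns its keep, via $E|\tilde A_t\cap[a,b]|=(b-a)t$. Because everything is jointly Gaussian, matching covariances plus tightness suffices, and no explicit pathwise coupling of sample orderings is ever needed. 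If you want to complete your argument, this covariance-matching route is the one to take; as written, your proposal stops exactly where the proof begins to be nontrivial.
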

 \begin{remark}
 \label{rem:open} 
  The construction of the Skorohod space  $D ([0,\infty))$, given for example in \cite[Section 16, p. 166--179]{Billin} could be repeated {\it mutatis mutandi} to obtain the Skorohod space $D((f_c,1])$. This results in a complete separable metric space, with the property that weak convergence is characterized by convergence of the restriction to $D([f_0,1])$ for every $f_0 \in (f_c,1]$.   \end{remark} 
  For an explicit formula for the one-dimensional marginals, let 
$$g(f) : =\frac{1}{E I_+} \sqrt{  f (1-f)E I_+    + \left (\frac{1-f}{1-f_c}\right)^2 \left ( f_c^2 E (I_+^2) +2 f_c EI_+ E I_- + E (I_-^2)\right)}.$$
 and let $Z$ denote a standard normal random variable. Then in terms of the marginals, the theorem states that   
  $$
    \sqrt{n}   \hat \Delta_n (f) \Rightarrow g(f) \begin{cases}  Z  & f>f_c \\ 
    |Z|  & f=f_c,
     \end{cases}  
  $$
   where the limiting distribution for $f=f_c$ is due to the well-known identity 
    $\Psi(W_1)  \eqd |W_1(1)|$,  (e.g.  \cite[Corollary 2.23]{Kalen}). Since for every $f\in [0,f_c)$, $L(f)$ is positive recurrent, and $f \to L_n (f)$ is a nonnegative nondecreasing function, it easily follows that $(\frac{L_n}{\sqrt{n}} |_{[0,f_c)}:n\in\N)$ converges uniformly on compacts to $0$ in probability and then, the  same holds for $(\sqrt{n}\hat \Delta_n|_{[0,f_c)}:n\in\N_+)$.  In light of the above,  the sequence  of right-continuous processes $(\sqrt{n}\hat  \Delta_n (\cdot):n\in\N)$  converges in some appropriate sense  to a process on $[0,1]$ which is identically $0$ on $[0,f_c)$, is nondegenerate half-normal at $f_c$, and a continuous Gaussian process on $(f_c,1]$,  whose right limit at $f_c$ is nondegenerate. In particular, the sequence is not-tight in $D([0,1])$, and neither is its restriction to $D([f_c,1])$. \\
    
Note that if we relax \eqref{eq:f_cexists} and allow $E I_- = E I_+$, then the ``scalable" fitness interval $[f_c,1]$  reduces to a point $\{1\}$, because for any $f<1$, $L(f)$ is positive recurrent. It follows from the definition of $\hat F_n$, that $\hat F_n (1)$ is the indicator of the event $\{X_n >0\}$. However, when $E I_-=E I_+<\infty$, this event occurs infinitely often, a.s. However,  $X_n = S_n (1) - M_n (1)$ and $S_n(1)=\sum_{0<i\le n} I_j$, and it follows from Donsker's theorem that when $E I_-=EI_+$ and $E (I^2)<\infty$, $X_n /\sqrt{n} \Rightarrow \Psi( \sigma(I) W_1)$.   \\

We  comment that if $D$ is any distribution function,  we may consider the ecosystem obtained as above but with fitnesses sampled from the distribution $D$ rather than the $U[0,1]$ distribution. If $D$ has atoms then a.s. multiple elements with the same fitness will exist, and therefore we impose the additional condition that when when population size decreases the least fit elements are removed, and in case there are more elements with identical fitness than we need to remove, we choose those to be removed by the index of the random variable which determined their fitness, in increasing order. It is easy to see that the number of elements with fitness not exceeding $f$ at time $n$ is then given by $L_n (D(f))$  reducing  the analysis to the uniform fitness model above. \\

As a final remark,  we observe that the  ecosystem has an interpretation as an urn system.  Let $\mu$ be a probability on the subsets of $\N$, and suppose that  we have a system of urns labeled by $\N$ (only urns in the support of $\mu$ will play a role here). We begin with all urns empty at time $0$. Then at each time $n\in\N$,  we add $I_n$ balls to the system if $I_n\ge 0$ or attempt to remove balls from the system otherwise. If $I_n \ge 0$, we add $I_n$ balls, with each  ball placed in urn $k$ with probability $\mu (\{k\})$, independently of the others. If $I_n <0$, then we remove the minimum between the number of the balls in the system and $|I_n|$, starting from urn $1$, in increasing order, and ignoring empty urns. The combined number of balls in urns $1,\dots,k$ at time $n$, is then given by $L_n (\mu (\{1,\dots,k\}))$.  
 \section{Proofs}
\subsection{Proof of Theorem \ref{th:glivenko}}
\begin{lemma} 
\label{lem:limits}
Assume condition \eqref{eq:f_cexists}. If $ E I_+<\infty$, then 
 \begin{enumerate} 
 \item
 $  \lim_{n\to\infty} L_n (f)/n = (f-f_c)_+ E I_+$, a.s. 
 \item $\lim_{n\to\infty} M_n (f)/n = -(f-f_c)_- E I_+$, a.s. 
 \end{enumerate} 
  \end{lemma}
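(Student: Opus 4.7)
The plan is to reduce both statements to the strong law of large numbers applied to the IID summands defining $S_n(f)$, and then use Lemma~\ref{lem:LS} together with a simple deterministic fact about the running minimum of a sequence whose Cesàro average converges.

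First I would observe that the summands $C_j(f) - (I_j)_-$ defining $S_n(f)$ are IID with finite mean: conditioning on $I_j$, the variable $C_j(f)$ is binomial with parameters $(I_j)_+$ and $f$, so by Wald's identity $E[C_j(f)] = f\, E I_+ < \infty$, and hence $E[C_j(f) - (I_j)_-] = fEI_+ - EI_- = (f-f_c)EI_+$. By the strong law of large numbers,
$$\frac{S_n(f)}{n} \xrightarrow[n\to\infty]{\text{a.s.}} (f-f_c) EI_+.$$

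Next I would establish the deterministic fact that if $(a_n)$ is a real sequence with $a_n/n \to c$, and $m_n := \min_{0\le k\le n} a_k$ (with $a_0 := 0$), then $m_n/n \to \min(c,0)$. For the upper bound, $m_n \le a_n$ gives $\limsup m_n/n \le c$, and $m_n \le 0$ gives $\limsup m_n/n \le 0$. For the lower bound, fix $\varepsilon > 0$; choose $N$ so that $a_k \ge (c-\varepsilon)k$ for all $k\ge N$, and let $B := \max_{k<N}|a_k|$. For $n$ large enough, $m_n \ge \min(-B, \min(c-\varepsilon,0)\cdot n) = \min(c-\varepsilon,0)\cdot n$, giving $\liminf m_n/n \ge \min(c-\varepsilon, 0)$. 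Letting $\varepsilon \downarrow 0$ completes the argument. Applying this with $a_n = S_n(f)$ and $c = (f-f_c)EI_+$ yields
$$\frac{M_n(f)}{n} \xrightarrow[n\to\infty]{\text{a.s.}} \min\bigl((f-f_c)EI_+, 0\bigr) = -(f-f_c)_- EI_+,$$
which is statement (2).

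Finally, statement (1) follows immediately by subtraction from Lemma~\ref{lem:LS}:
$$\frac{L_n(f)}{n} = \frac{S_n(f)}{n} - \frac{M_n(f)}{n} \xrightarrow[n\to\infty]{\text{a.s.}} (f-f_c)EI_+ - \bigl(-(f-f_c)_- EI_+\bigr) = (f-f_c)_+ EI_+.$$

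No step here is a serious obstacle; the only slightly delicate point is handling the critical case $f = f_c$ where $c = 0$, but the deterministic lemma above covers it uniformly. The argument relies only on finiteness of $EI_+$ (hence also of $EI_-$ by \eqref{eq:f_cexists}) and requires no second moment.
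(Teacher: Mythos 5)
Your proof is correct, and it takes a genuinely different route from the paper's at the key step. Both arguments start from Lemma~\ref{lem:LS} and the strong law of large numbers for $S_n(f)/n$ (the paper does not spell out the Wald computation for $E C_j(f)$, but that is the same). The paper then splits into cases: for $f>f_c$ the walk $S(f)$ is transient to $+\infty$, so $M_n(f)$ is bounded below a.s.\ and (1) follows at once; for $f\le f_c$ it deduces $L_n(f)/n\to 0$ by squeezing against $L_n(f')$ for $f'>f_c$, using that $f\mapsto L_n(f)$ is nonnegative and nondecreasing, and only then recovers (2) from $M_n=S_n-L_n$. You instead prove (2) first, uniformly over all cases, via the deterministic fact that if $a_n/n\to c$ then $\min_{0\le k\le n}a_k/n\to\min(c,0)$, and obtain (1) by subtraction. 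Your route treats each $f$ in isolation and handles the critical case $f=f_c$ directly, rather than by comparison with supercritical levels; the paper's route exploits the monotone coupling of the family $(L(f))_{f}$, which is in the spirit of the rest of the paper. One small imprecision in your lower bound: the claimed equality $\min(-B,\min(c-\varepsilon,0)\,n)=\min(c-\varepsilon,0)\,n$ fails when $c\ge\varepsilon$; but the conclusion $\liminf_n m_n/n\ge\min(c-\varepsilon,0)$ still holds in that case because $-B/n\to 0$, so nothing is lost. Both arguments use only first moments, as they must.
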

  \begin{proof} 
  Observe that  $E S_n (f) = f E I_+ - E I_- = (f-f_c)E I_+$. Thus,  $S_n(f)$ is transient to $+\infty$ if $f>f_c$, null recurrent if $f=f_c$ and transient to $-\infty$ if $f<f_c$. In particular, $\inf_{n\in \N} M_n (f) >-\infty$ a.s. or $=-\infty$ a.s. according to whether $f>f_c$ or $f\le f_c$. Since by Lemma \ref{lem:LS},  $L_n (f) = S_n (f)- M_n(f)$, it follows from the Law of Large Numbers that for $f>f_c$, 
  $$ \lim_{n\to\infty} L_n (f) / n = (f-f_c) E I_+,\mbox{ a.s.}$$
   But since $L_n (\cdot)$ is nonnegative and monotone, it then follows that $ L_n(f)/n \to 0$ a.s. for all $f\le f_c$. \\
    As a result, for $f\le f_c$, 
    $$\lim_{n\to\infty} M_n (f)/n = \lim_{n\to\infty} S_n (f)/n= (f-f_c) E I_+, \mbox{ a.s.},$$
    completing the proof. 
\end{proof}  
\begin{proof}[Proof of Theorem \ref{th:glivenko}]
It follows from Lemma \ref{lem:limits}-(1) that  $\lim_{n\to\infty} X_n =\infty$ a.s. In particular, by taking $n$ large enough  we may assume $X_n >0$. Let  
 let $\hat Q_k(f):= \frac{\sum_{0<j \le k} \ch_{[0,f]}(U_j) - k f}{k}$. Then 
 $$\hat F_n (f) = \frac{1}{X_n} \left ( S_{n,+} \hat Q_{S_{n,+}}(f)+ f S_{n,+}-S_{n,-}-M_n (f) \right),$$
  and we have 
  \begin{align}
  \nonumber  
  \hat \Delta_n (f)&= \frac{S_{n,+}}{X_n} \hat Q_{S_{n,+}} (f) +\frac{ f_c S_{n,+} - S_{n,-}}{X_n} + (f-f_c) \frac{S_{n,+}}{X_n} - \frac{ (f-f_c)_+ }{1-f_c}- \frac{M_n (f)}{X_n}  \\
   & =   \frac{S_{n,+}}{X_n} \hat Q_{S_{n,+}}(f) + \frac{ f_c S_{n,+} - S_{n,-}}{X_n} \\
  \label{eq:above} &~~~~~+ (f-f_c) \left (\frac{ S_{n,+}}{X_n} - \frac{1}{1-f_c}\right)-\frac{(f-f_c)_-}{1-f_c}-\frac{M_n(f)}{X_n}.
 \end{align} 
  Since $S_{n,+}\to \infty$ a.s., it follows from the Glivenko-Cantelli  Theorem that $\lim_{n\to\infty } \sup_{f\in [0,1]} |\hat Q_{S_{n,+}}(f)|=0$.  
  Assume now that $E I_+<\infty$. Then by Lemma \ref{lem:limits}, $\lim_{n\to\infty} X_n/n = (1-f_c) E I_+$, a.s. It then follows from the Law of Large Numbers that 
   $$ \lim_{n\to\infty} \frac{S_{n,+}}{X_n} = \frac{1}{1-f_c }\mbox{ and } \lim_{n\to\infty} \frac{f_c S_{n,+}-S_{n,-} }{X_n} =0,\mbox{ a.s.}$$
    In addition,  $\sup_{f \in [f_c,1]} |M_n (f) |\le |M_n (f_c)|$, and by Lemma \ref{lem:limits}-(2), $\lim_{n\to\infty} |M_n (f_c)|/n =0$ a.s.    Thus it follows from  \eqref{eq:above} that  $\lim_{n\to\infty} \sup_{f\in[ f_c,1]} |\hat \Delta_n (f)| = 0$, a.s. Next, observe that  since $F|_{[0,f_c]}\equiv 0$, 
    $ \sup_{f \in [0, f_c]} |\hat \Delta_n (f)| = \sup_{f\in [0,f_c]} L_n (f)/ X_n= L_n(f_c)/ X_n$. But by Lemma \ref{lem:limits}-(1), $\lim_{n\to\infty} L_n (f_c)/X_n=0$, a.s. This completes the proof for the case $E I_+<\infty$. \\ 
  
   We now assume  $E I_+=\infty$. Here $f_c=0$ and we may rewrite \eqref{eq:above}  as 
   \begin{equation*}
   \label{eq:representation} 
    \hat \Delta_n (f):=   \frac{S_{n,+}}{X_n} \hat Q_{S_{n,+}}(f) - \frac{S_{n,-}}{X_n} + f  \left (\frac{ S_{n,+}}{X_n} - 1 \right)-\frac{M_n (f)}{X_n}.
    \end{equation*}
   Since $ E I_-<\infty = E I_+$, we have   $\lim_{n\to\infty} S_{n,-}/X_n =0$ a.s., and 
     $$ \frac{ S_{n,+}}{X_n} \le   \frac{ S_{n,+}}{S_{n,+}-S_{n,-} }= \frac{ 1}{1- \frac{S_{n,-}}{S_{n,+}}}.$$
      Therefore $\lim_{n\to\infty} \frac{S_{n,+}}{X_n} =1$ a.s. Finally, $\sup_{f\in [0,1]} |M_n (f)|=|M_n(0)|= S_{n,-}$.    Thus,      $ \lim_{n\to\infty} \sup_{f \in [0,1]} | \hat \Delta_n (f ) |=0$, a.s.
        \end{proof}
  \subsection{Proof of Theorem \ref{th:CLT}} 
  We first wish to explain the problem and how one can solve it. Consider  \eqref{eq:above} from the proof of Theorem \ref{th:glivenko} above. Multiplying both sides by $\sqrt{n}$, it follows from the Glivenko-Cantelli theorem that the first term on the right-hand side converges to a Brownian Bridge multiplied by some constant, and it is not hard to show that jointly, the  sum of the second and third term converges to an independent standard normal random variable multiplied by an  affine function.  The fourth term vanishes on $[f_c,1]$, and the last term, $-\sqrt{n} M_n (f) /X_n= \frac{M_n/\sqrt{n}}{X_n/n}$ converges uniformly to $0$ on $[f_0,1]$ for every $f_0 \in (f_c,1]$. This argument leads to a proof of the  convergence claim for the first component in Theorem \ref{th:CLT}, $\sqrt{n}\hat \Delta_n |_{[f_0,1]}$. However, the joint convergence statement in the theorem requires more. This is because at $f_c$ the fifth term depends on the past, $(S_k(f):k\le n)$, and  scales to a nondegenerate random variable. In order to take this into account we will consider the centered  processes $(\ol{S}_n (f): f \in (f_c,1])$ and $(\ol{S}_{[nt]}(f_c):t \in [0,1])$, where
 $$\ol{S}_n (f) := S_n (f) - E S_n (f) = S_n (f) - (f-f_c) n E I_+.$$
We will prove that the  pair of processes scales jointly to some limiting process in the product space. We will then express  $\sqrt{n} \hat \Delta_n $ as a function of the pair and apply the convergence result to deduce the theorem. The core of our proof is identifying the covariance structure of the limit process in the product space, not the actual convergence. In fact, the methods of Bickel and Wichura \cite{Bi_Wi} allow to prove convergence of the two-parameter process $(\frac{\ol{S}_{[nt]}(f)}{\sqrt{n}}:(t,f)\in[0,1]^2)$, but this is not needed in our work. \\

We begin with a simple tightness statement. 
 \begin{lemma} 
 \label{lem:tight}
For any $f_0 \in [0,1]$, the laws of  $(\frac{\ol{S}_n }{\sqrt{n}}|_{[f_0,1]}:n\in\N)$  in $D([f_0,1])$ are tight. 
\end{lemma}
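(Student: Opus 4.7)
The plan is to exploit the decomposition
\begin{equation*}
\frac{\ol S_n(f)}{\sqrt n} \;=\; \sqrt{\tfrac{S_{n,+}}{n}}\,\alpha_{S_{n,+}}(f) \;+\; f\cdot\frac{S_{n,+}-n\,EI_+}{\sqrt n} \;+\; \frac{n\,EI_- - S_{n,-}}{\sqrt n},
\end{equation*}
where $\alpha_k(f):=k^{-1/2}\sum_{j=1}^k(\ch_{[0,f]}(U_j)-f)$ is the classical uniform empirical process (with the convention $\alpha_0\equiv 0$). This identity follows by adding and subtracting $fS_{n,+}$ in the definition of $S_n(f)$ and using the relation $f_c\,EI_+=EI_-$. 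Since all candidate subsequential limits below are continuous in $f$, addition is jointly continuous at those limit points in the $J_1$-topology, so it suffices to prove tightness of each of the three summands separately in $D([f_0,1])$.

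The second and third summands are affine functions of $f$ with random scalar coefficients. Under the moment condition \eqref{eq:mom}, the Central Limit Theorem applied to the i.i.d.\ sequences $((I_j)_+)$ and $((I_j)_-)$ yields that $(S_{n,+}-nEI_+)/\sqrt n$ and $(S_{n,-}-nEI_-)/\sqrt n$ converge in law, hence form tight sequences of real random variables. The corresponding affine processes are then trivially tight in $D([f_0,1])$: their modulus of continuity is bounded by $|f_0-1|$ times a tight scalar.

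The first summand is the main (mild) obstacle, as it is a classical empirical process evaluated at a random sample size. By Donsker's theorem, $\alpha_k\Rightarrow \mathrm{Br}_1$ in $D([0,1])$, so in particular the family $\{\alpha_k:k\in\N\}$ is tight there. By construction $S_{n,+}$ is a function of $(I_j)$ alone, hence independent of $(U_j)$; conditioning on $S_{n,+}$ gives, for any compact $K\subset D([0,1])$,
\begin{equation*}
P\bigl(\alpha_{S_{n,+}}\in K\bigr)=\sum_{k\ge 0}P(S_{n,+}=k)\,P(\alpha_k\in K)\;\ge\;\inf_{k\ge 0}P(\alpha_k\in K),
\end{equation*}
so the uniform tightness of $(\alpha_k)_k$ transfers immediately to $(\alpha_{S_{n,+}})_n$. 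Finally, by the Law of Large Numbers, $\sqrt{S_{n,+}/n}\to \sqrt{EI_+}$ a.s., so the scalar prefactor is bounded in probability; multiplying a tight family in $D([0,1])$ by such a scalar preserves tightness. Restricting to $D([f_0,1])$ and combining the three tight summands yields the lemma.
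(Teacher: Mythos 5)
Your proposal is correct and follows essentially the same route as the paper: the same decomposition of $\ol{S}_n/\sqrt{n}$ (your two affine terms are the paper's single process $R_n$), the same conditioning on $S_{n,+}$ to transfer tightness of the classical empirical processes to the randomly indexed ones, and the same treatment of the prefactor $\sqrt{S_{n,+}/n}$ via the law of large numbers. The only caveat is that the two combination facts you assert without proof --- that multiplying a $J_1$-compact set by scalars ranging over a compact interval preserves compactness, and that tightness of the sum follows because the affine summands are $C$-tight (tightness of summands alone does \emph{not} suffice in the $J_1$-topology) --- are precisely the two points the paper proves in detail (compactness of $(K_1)_I$ and of $(K_1)_I+K_2$), so those arguments should be supplied to make the proof self-contained.
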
 

 \begin{proof}
  Let $Q_k(f) := \frac{ \sum_{j=1}^k \ch_{[0,f]}(U_j) - k f}{\sqrt{k}}$. Then by the Central Limit Theorem for empirical processes,  \cite[Theorem 14.3]{Billin}, $Q_k\Rightarrow \mbox{Br}_1$ in $D[0,1]$. Let $T:D([0,1])\to D ([f_0,1])$ denote the restriction mapping $T x = x|_{[f_0,1]}$. Note that $T$ is not continuous, but  $T$ is measurable. Since $\mbox{Br}_1$ is a continuous process, it follows from the Mapping Theorem \cite[Theorem 2.7]{Billin} that $Q_k | _{[f_0,1]}\Rightarrow{\mbox{Br}_1}|_{[f_0,1]}$ in $D([f_0,1])$. In particular,   the restricted processes $(Q_k | _{[f_0,1]}:k\in \N)$ are tight in $D([f_0,1])$.  
  For $A\subset D([f_0,1])$ and $c \in \R$, let  $c A:= \{ cx : x \in A\}$. If $A$ is compact and $I\subset \R$ is compact, then $A_I:= \cup_{c \in I} c A$ is compact in $D([f_0,1])$. Indeed, if $(c_n x_n:n \in\N)$ is any sequence in $A_I$, such that $c_n\to c$ in $I$ and $x_n \to x$ in $D ([f_0,1])$, then for any sequence $(\lambda_n:n\in\N)$ of homeomorphisms of $[f_0,1]$ onto $[f_0,1]$ satisfying
   $$ \| \lambda_n (t) -t\|_\infty \to 0,~ \|x_n - x \circ \lambda_n \|_\infty \to 0,$$
    we have 
  $$ \|c_n x_n - (cx) \circ \lambda_n \|_\infty \le |c_n -c| \|x_n\|_\infty + |c| \|x_n -x \circ \lambda_n \|_\infty \to 0.$$
   Thus, every sequence in $A_I$ has a subsequence converging  in $D([f_0,1])$ to a limit in  $A_I$.
  Let $R_n(f):= \frac{f S_{n,+}- S_{n,-} - (f-f_c)n E I_+}{\sqrt{n}}$. 
  It is easy to see that the  laws of  $(R_n|_{[f_0,1]}:n\in\N)$  as processes in $C([f_0,1])$ are tight. Indeed, 
  $P( |R_n(f_0)|\ge a) = P( \frac{|(f_0 S_{n,+} -S_{n,-})-(f_0-f_c) n E I_+|}{\sqrt{n}} \ge a)$, therefore it follows from the Central Limit Theorem that 
  \begin{equation} 
  \label{eq:one_pt} 
  \lim_{a\to\infty} \lim_{n\to\infty} P(|R_n (f_0)|\ge a)=0.
\end{equation}
  Furthermore
    $$ P ( \sup_{f \in [f_0,1],u <\delta} |R_n (f) - R_n ((f+u) \wedge 1)|\ge  \epsilon ) \le
     P (\frac{  |\delta| |S_{n,+}- n E I_+|}{\sqrt{n}} \ge \epsilon),$$
     and again the by the Central Limit Theorem, 
     \begin{equation}
     \label{eq:oscillations} 
   \lim_{\delta \to 0} \lim_{n\to\infty} P ( \sup_{f \in [f_0,1],u <\delta} |R_n (f) - R_n ((f+u) \wedge 1)|\ge  \epsilon ) =0.
   \end{equation} 
   By \cite[Theorem 7.3]{Billin}, \eqref{eq:one_pt} and \eqref{eq:oscillations}  guarantee that the laws of $(R_n|_{[f_0,1]}:n\in\N)$ are tight in $C([f_0,1])$. 
   Observe that
    $$ \frac{ \ol{S}_n}{\sqrt{n}}    = 
    \sqrt{\frac{S_{n,+}}{n}} Q_{S_{n,+}} + R_n.$$
    In order to simplify notation, in the remainder of the proof we abbreviate $Q_n|_{[f_0,1]}$, $R_n|_{[f_0,1]}$ and $\ol{S}_n|_{[f_0,1]}$ to $Q_n$, $R_n$ and $\ol{S}_n$.  Fix $\epsilon>0$ and let $K_1 \subset D ([f_0,1])$ and $K_2 \subset C([f_0,1])$ be compact sets such that for all $n\in\N$, 
         $$ P( Q_{n} \in K_1)\ge 1-\epsilon,~P(R_n  \in K_2 )\ge 1-\epsilon.$$
 Let $I := \sqrt{E I_+} [1-\epsilon,1+\epsilon]$. By the Law of Large Numbers,
       $ P( \sqrt{\frac{S_{n,+}}{n}} \in I) \ge (1-\epsilon)$ for all $n$ large enough, hence
     $$ P( \frac{ \ol{S}_n}{\sqrt{n}} \in \sqrt{ \frac{S_{n,+}}{n}}K_1+K_2) \le  P ( \frac{\ol{S}_n}{\sqrt{n}} \in (K_1)_I + K_2) + \epsilon .$$
      But, 
    \begin{align*} 
     P( \frac{\ol{S}_n}{\sqrt{n}} \in \sqrt{ \frac{S_{n,+}}{n}}K_1 + K_2 ) & \ge \sum_{k=0}^\infty P( \sqrt{\frac{ S_{n,+}}{n}} Q_{S_{n,+}} \in \sqrt{\frac{ S_{n,+}}{n}}   K_1,R_n \in K_2 , S_{n,+} = k) 
     \\& \ge
      \sum_{k=0}^\infty P( Q_k \in K_1, R_n \in K_2 | S_{n,+}=k)P( S_{n,+} =k) \\
      & = \sum_{k=0 }^\infty P(Q_k \in K_1) P( R_n \in K_2, S_{n,+}=k) 
      \ge (1-\epsilon)^2\ge 1-2\epsilon.
      \end{align*}
      Therefore, for all $n$ large enough, $P(\frac{\ol{S}_n}{\sqrt{n}} \in (K_1)_I + K_2) \ge 1-3\epsilon$. It remains to prove that $(K_1)_I+K_2$ is compact in $D([f_0,1])$. Let $(x_n+y_n:n\in\N)$ be a sequence in $(K_1)_I+K_2$ with the property $x_n \to x$ in $D([f_0,1])$ and $y_n \to y$ in $C([f_0,1])$. Furthermore, let $(\lambda_n:n\in\N)$ be a sequence of homeomorphisms from $[f_0,1]$ onto $[f_0,1]$ such that
       $$ \| \lambda_n(t)-t\|_\infty\to 0\mbox{ and } \|x_n - x\circ \lambda_n\|_\infty\to 0.$$ 
       Then 
       $$ \|x_n + y_n - (x+y) \circ \lambda_n \|_\infty \le
        \|x_n - x \circ \lambda_n\|_\infty + \|y_n - y\|_\infty+ \|y- y \circ \lambda_n\|_\infty \to 0,$$
         because $y$ is uniformly continuous. Hence any sequence in $(K_1)_I+K_2$ has a subsequence which converges in $D([f_0,1])$ to a limit in $(K_1)_I+K_2$.   

 \end{proof}  
Next we wish  to express the Brownian motion $W_2$ in terms of two other Brownian motions, and define an auxiliary process $X_\infty$. The following is well known. 
 \begin{lemma}
 Let $A,B$ two nonnegative square integrable random variables satisfying $AB =0$, a.s. Then 
  $ \sigma (A) \sigma (B) \ge (E A) (E B)$.
  \end{lemma}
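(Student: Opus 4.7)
The plan is to observe that the hypothesis $AB=0$ a.s., combined with nonnegativity, forces the covariance of $A$ and $B$ to be exactly $-(EA)(EB)$, and then to invoke Cauchy--Schwarz.

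Concretely, I would first note that since $A,B\ge 0$ and $AB=0$ almost surely, we have $E[AB]=0$. Consequently
\[
\mathrm{Cov}(A,B)=E[AB]-(EA)(EB)=-(EA)(EB).
\]
Next I would apply the Cauchy--Schwarz inequality to the centered variables $A-EA$ and $B-EB$, which yields
\[
|\mathrm{Cov}(A,B)|\le \sigma(A)\sigma(B).
\]
Combining these two displays gives $(EA)(EB)\le \sigma(A)\sigma(B)$, which is the stated inequality.

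There is no real obstacle here; the only subtlety worth mentioning is that the conclusion is automatically satisfied (and trivially so) in the degenerate case where $EA=0$ or $EB=0$, so one does not need to separate cases. Equality in Cauchy--Schwarz would correspond to $A-EA$ being a scalar multiple of $B-EB$, which is compatible with $AB=0$ essentially only when one of the variables is a.s. a Bernoulli-type indicator, but the lemma does not require tracking this case.
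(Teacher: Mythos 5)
Your proof is correct and is essentially the paper's argument: the paper normalizes to $EA=EB=1$ and writes $1=E\bigl((A-1)(1-B)\bigr)\le\sigma(A)\sigma(B)$, which is exactly your computation $\mathrm{Cov}(A,B)=-(EA)(EB)$ followed by Cauchy--Schwarz, just phrased after rescaling. Your version dispenses with the normalization and the case split, which is a minor streamlining, not a different proof.
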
 
 \begin{proof}
  The inequality trivially holds if $E A=0$ or $E B=0$. Therefore we will assume that  the right-hand side is strictly positive. Dividing the inequality by $(E A) (E B)$, we obtain the equivalent inequality : 
    $\sigma (A/ E A) \sigma (B /E B) \ge 1$. Therefore there is no loss of generality assuming $ E A = EB =1$. Finally, from Cauchy-Schwarz, 
     $$ 1= E (A -1) (1-B) \le \sigma (A) \sigma (B),$$
      and the claim follows. 
 \end{proof} 
 As a special case, letting $A := I_+$ and $B:=I_-$,  we obtain 
  \begin{equation*}
   \sigma ( I_+) \sigma (I_-) \ge E (I_+) E (I_-),
  \end{equation*} 
and therefore define $\rho \in [0,1]$ by letting 
$$ \rho :=\begin{cases}  \frac{ E(I_+) E (I_-) }{\sigma (I_+)\sigma(I_-)} & \mbox{if } E I_- > 0;\\  
 0&\mbox{otherwise.} \end{cases}$$  
Let $W_2'$ and $W_2''$  be independent copies of standard Brownian motion independent of $W_1$, and define 
$$W_3':= \rho W_2' + \sqrt{1-\rho^2} W_2''.$$
If $f_c =0$, we let $W_2 :=W_2'$. Otherwise, $\sigma_2 >0$ and we let  
$$W_2 := \frac{1}{\sigma_2 } \left ( f_c \sigma(I_+) W_2' + \sigma(I_-)W_3'\right).$$
We observe that when $f,f'\in [0,1]$ and $0\le s\le t \le 1$, we have 
 \begin{align}
 \label{eq:master_covariance} 
 E &\left(( f \sigma(I_+) W_2'(s)+\sigma(I_-)W_3'(s) )(f'\sigma(I_+) W_2'(t) + \sigma(I_-)W_3'(t) )  \right) \\
 \nonumber
 & =
 s \left(  ff' \sigma^2 (I_+) +\sigma^2 (I_-) + (f +f' )\sigma(I_+) \sigma(I_-) \rho  \right )\\ 
 \nonumber
&=  s \left ( ff' \sigma^2 (I_+) + \sigma^2 (I_-) + (f+f') E I_+ E I_- \right).
 \end{align} 
 When $f=f'=f_c$ and $s=t$, the right-hand side is equal to $\sigma_2^2$. Therefore $W_2$ is indeed a standard Brownian motion. 
We also let 
 $$ X_\infty(f):= \sqrt{E I_+} \mbox{ Br}_1 (f) + f \sigma (I_+) W_2'(1) + \sigma (I_-) W_3'(1),~f \in [0,1].$$ 
\begin{lemma}
 Let $0\le  s\le t \le 1$ and $0 \le f \le f'\le 1$. Then we have 
 \label{lem:covariance} 
 \begin{enumerate}
 \item $E \left (  X_\infty (f) X_\infty (f')\right ) =f(1-f') E I_+  + f f' \sigma^2  (I_+) + \sigma^2 (I_-) +   (f+f') E I_+ E I_-$. 
 \item $ E \left (Y_\infty (s) Y_\infty (t) \right) = s E (X_\infty (f_c)^2)$. 
 \item $ E \left ( X_\infty (f) Y_\infty (t)\right) = t E \left (X_\infty (f) X_\infty (f_c)\right)$. 
 \end{enumerate} 
\end{lemma}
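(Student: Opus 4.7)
The plan is to compute each identity by direct expansion, exploiting the independence of the driving pieces $W_1, W_2', W_2''$ and the uniform variable $U$, together with the It\^o isometry for stochastic integrals against $W_1$.

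For item (1), I expand the product $X_\infty(f)X_\infty(f')$ and use: the independence of $\mbox{Br}_1$ from $(W_2',W_3')$; the bridge covariance $E(\mbox{Br}_1(f)\mbox{Br}_1(f'))=f(1-f')$ for $f\le f'$; the variances $E(W_2'(1)^2)=E(W_3'(1)^2)=1$; and the correlation $E(W_2'(1)W_3'(1))=\rho$. The identity $\rho\,\sigma(I_+)\sigma(I_-)=E I_+\,E I_-$, which holds by definition of $\rho$ when $E I_->0$ and trivially (both sides are $0$) when $E I_-=0$, then collapses the cross terms into the asserted expression.

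For item (2), the key observation is that $\tilde W_1$ is a functional of $(W_1,U)$ while $W_2$ is a functional of $(W_2',W_2'')$, so the two are independent. By Lemma~\ref{lem:W4BM}, $\tilde W_1$ is standard Brownian motion, and \eqref{eq:master_covariance} specialised at $f=f'=f_c$, $s=t$, shows that $W_2$ is standard Brownian motion (as already noted in the paper). Hence $E(Y_\infty(s)Y_\infty(t))=s(\tilde\sigma_1^2+\sigma_2^2)$ for $s\le t$, and item (1) at $f=f'=f_c$ gives $\tilde\sigma_1^2+\sigma_2^2=E(X_\infty(f_c)^2)$, as required.

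Item (3) is the most technical. I split
$$E(X_\infty(f)Y_\infty(t)) = \tilde\sigma_1\,E(X_\infty(f)\tilde W_1(t)) + \sigma_2\,E(X_\infty(f) W_2(t)).$$
Since $W_2$ is independent of $W_1$, only the $f\sigma(I_+)W_2'(1)+\sigma(I_-)W_3'(1)$ piece of $X_\infty(f)$ contributes to the second term, and this contribution is computed directly from \eqref{eq:master_covariance}, yielding $t\bigl(ff_c\sigma^2(I_+) + (f+f_c) E I_+ E I_- + \sigma^2(I_-)\bigr)$. For the first term, $(W_2',W_3')$ are independent of $(W_1,U)$, so only the $\sqrt{E I_+}\,\mbox{Br}_1(f)$ piece contributes, and I must compute $E(\mbox{Br}_1(f)\tilde W_1(t))$. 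Conditioning on $U$ and using the It\^o isometry reduces this to evaluating $|\tilde A_t|=t(1-f_c)$ (as in the proof of Lemma~\ref{lem:W4BM}) and $E|[0,f]\cap\tilde A_t|$. The latter, for $f\ge f_c$, equals $t(f-f_c)$ by the symmetry identity $P(s\in A_t)=t$ for every $s\in[0,1)$, which yields $E|A_t\cap[0,a]|=ta$. A short calculation then produces $E(\mbox{Br}_1(f)\tilde W_1(t))=t(1-f)\sqrt{f_c/(1-f_c)}$, so the $\tilde W_1$-contribution is $t\,f_c(1-f) E I_+$, and summing with the $W_2$-contribution gives exactly $t\,E(X_\infty(f)X_\infty(f_c))$ as read off from item (1).

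The main obstacle I anticipate is the bookkeeping in item (3): making the algebraic combination of the bridge/$\tilde W_1$ correlation with the variance/cross-variance contributions from $(W_2',W_3',W_2)$ match $t$ times the covariance of item (1) on the nose. The case $f<f_c$ requires a parallel but simpler computation: since $\tilde A_t\subset[f_c,1]$, one has $E|[0,f]\cap\tilde A_t|=0$, and $E(W_1(f)W_1(f_ct))$ must be split according to the sign of $f-f_ct$, but the same strategy reproduces $t\,E(X_\infty(f)X_\infty(f_c))$ where now item (1) is read with $f<f_c$ in the first slot.
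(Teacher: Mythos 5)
Items (1) and (2), and item (3) in the range $f\ge f_c$, are correct and in substance identical to the paper's proof: item (1) is \eqref{eq:master_covariance} at $s=t=1$ plus the bridge covariance, item (2) is the independence of the standard Brownian motions $\tilde W_1$ and $W_2$ together with $\tilde\sigma_1^2+\sigma_2^2=E(X_\infty(f_c)^2)$, and item (3) splits into the $(W_2',W_3')$ contribution $t\bigl(ff_c\sigma^2(I_+)+\sigma^2(I_-)+(f+f_c)EI_+EI_-\bigr)$ and the $\mbox{Br}_1$--$\tilde W_1$ contribution. The only methodological difference is in the latter: you condition on $U$ and use the It\^o isometry with $|\tilde A_t|=t(1-f_c)$ and $E|\tilde A_t\cap[0,f]|=t(f-f_c)$ for $f\ge f_c$, whereas the paper decomposes $\mbox{Br}_1(f)$ into three independent increments of $W_1$ and pairs them with the two pieces of $\tilde W_1$; both give $E(\mbox{Br}_1(f)\tilde W_1(t))=t(1-f)\sqrt{f_c/(1-f_c)}$, hence the contribution $tf_c(1-f)EI_+$, and the sum matches $tE(X_\infty(f)X_\infty(f_c))$ as read from item (1).

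Your final sentence, however, is false: item (3) does not extend to $f<f_c$. For $0<f<f_c$ (so $f_c>0$) your own method gives, since $\tilde A_t\subset[f_c,1]$ forces $|\tilde A_t\cap[0,f]|=0$,
\begin{equation*}
E\bigl(\mbox{Br}_1(f)\tilde W_1(t)\bigr)=\tfrac{1}{\sqrt{f_c(1-f_c)}}\Bigl((1-f_c)\bigl(\min(f,f_ct)-ff_ct\bigr)+f_c\,f\,t(1-f_c)\Bigr)=\sqrt{\tfrac{1-f_c}{f_c}}\,\min(f,f_ct),
\end{equation*}
so the $\mbox{Br}_1$ part of $E(X_\infty(f)Y_\infty(t))$ equals $(1-f_c)EI_+\min(f,f_ct)$, while item (3) would require $t\,f(1-f_c)EI_+$; the identity $\min(f,f_ct)=tf$ fails for every $0<f<f_c$ and $0<t<1$ (e.g.\ $f=f_c/2$, $t=1/2$ gives $f_c/2$ versus $f_c/4$). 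So the claimed identity genuinely breaks below $f_c$ --- the splitting by the sign of $f-f_ct$ that you anticipate does not rescue it. This is consistent with the paper, whose proof of item (3) tacitly assumes $f\ge f_c$ (the decomposition of $\mbox{Br}_1(f)$ and the integrals over $[f_c,f]$ and $[f,1]$ presuppose it), and with the only use of the lemma, in Lemma~\ref{lem:findim}, where $f_j\in(f_c,1]$. You should restrict item (3) to $f\in[f_c,1]$ and delete the last claim.
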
 
\begin{proof} 
The first assertion   is a consequence of \eqref{eq:master_covariance}. The second follows from the definition of $Y_\infty$ and the first. We prove the third. The left-hand side is equal to 
   $$ \underset{(*)}{\underbrace{\sqrt{f_c(1-f_c)}E I_+ E \left(  \mbox{Br}_1 (f) \tilde W_1 (t) \right) }} + \underset{(**)}{\underbrace{
     E \left ( \left ( f \sigma(I_+)  W_2' (1) +\sigma (I_-) W_3'(1) \right ) \left ( f_c  \sigma (I_+) W_2' (t) + \sigma(I_-)  W_3'(t)\right ) \right ) }}.$$
By \eqref{eq:master_covariance}, 
$$ (**) =   t\left ( f f_c \sigma^2 (I_+) + \sigma^2 (I_-)  + (f+f_c) E I_+ E I_- \right).$$ 
   To compute $(*)$,  write  $\mbox{Br}_1(f)$ as a sum of three independent random variables, 
\begin{align*}  \mbox{Br}_1(f) &= (1-f) W_1(f)- f (W_1(1)-W_1(f))\\
  & = (1-f) W_1(f_c) + (1-f) (W_1(f)-W_1(f_c))-f (W(1)-W(f)), 
  \end{align*} 
We then have 
       \begin{align*} 
        \sqrt{f_c(1-f_c)} E ( \mbox{Br}_1(f)\tilde W_1(t)) = 
          &
           E \left(  (1-f_c) W_1(f_c t)  (1-f) W_1 (f_c)\right)\\
         & \quad\quad  + E \left ( f_c \int_{[f_c,f]}  \ch_{\tilde A_t} (s) d W_1(s) \times  (1-f) (W_1(f) - W_1(f_c))\right)    \\
         & \quad \quad -E \left ( f_c \int_{[f,1]} \ch_{\tilde A_t}(s) d W_1(s) \times  f (W(1)-W(f))\right) \\
         & = (1-f_c)(1-f) f_c t +f_c (1-f) E|\tilde A_t \cap [f_c ,f]|-f_c f E  |\tilde A_t \cap [f,1]|\\
          & = (1-f_c)(1-f) f_c t + f_c (1-f) (f-f_c) t - f_c f (1-f)t \\
           & = f_c (1-f) t \left ( 1-f_c -(f-f_c) + f\right)\\
            &=  f_c (1-f)t.
         \end{align*}
      Consequently, 
     $$ (*) = (E I_+)f_c (1-f)t,$$
      and the claim then  follows from the first assertion. 
\end{proof}
    Let $Y_n:=(Y_n(t):t\in [0,1])$ be the process defined through  $Y_n(t):= S_{\lr{nt}}(f_c)=\ol{S}_{\lr{nt}}(f_c)$. 
   \begin{lemma}
   \label{lem:findim} 
    The finite dimensional distributions of  $\frac{1}{\sqrt{n}}(\ol{S}_n|_{(f_c,1]},Y_n)$  converge 
    to those of $(X_\infty,Y_\infty)$. 
   \end{lemma}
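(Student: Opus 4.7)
The plan is to exhibit both $\ol{S}_n$ and $Y_n$ as partial sums of i.i.d.\ random vectors, deduce the finite-dimensional convergence from the multivariate central limit theorem, and then use Lemma \ref{lem:covariance} to identify the limiting covariance with that of $(X_\infty,Y_\infty)$. First I would decompose
$$S_n(f)=\sum_{k=1}^n (C_k(f)-(I_k)_-),$$
observing that each summand depends only on $I_k$ and $U_{S_{k-1,+}+1},\dots,U_{S_{k,+}}$, which makes the sequence $\xi_k(f):=C_k(f)-(I_k)_- - (f-f_c)E I_+$ i.i.d.\ in $k$. Condition \eqref{eq:f_cexists} combined with $E C_1(f)=f E I_+$ gives $E\xi_k(f)=0$, so
$$\ol{S}_n(f) = \sum_{k=1}^n \xi_k(f), \qquad Y_n(t) = \sum_{k=1}^{\lr{nt}} \xi_k(f_c),$$
and by \eqref{eq:mom} together with $0\le C_k(f)\le (I_k)_+$ the random variables $\xi_k(f)$ are square integrable.

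Next, fixing finitely many $f_c<f_1<\dots<f_q\le 1$ and $0\le t_1<\dots<t_p\le 1$, I would set $\Xi_k:=(\xi_k(f_1),\dots,\xi_k(f_q),\xi_k(f_c))$, an i.i.d.\ centered square-integrable sequence of random vectors. Every linear combination of the coordinates of $n^{-1/2}(\ol{S}_n(f_i),Y_n(t_j))_{i,j}$ has the form $n^{-1/2}\sum_{k=1}^n \langle a_k^{(n)},\Xi_k\rangle$ with uniformly bounded deterministic coefficients $a_k^{(n)}$, so the Lindeberg condition is immediate from dominated convergence and the Cram\'er--Wold device yields joint convergence to a centered Gaussian vector once the limiting covariance matrix is identified. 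Partial-sum bookkeeping gives
$$n^{-1}\mathrm{Cov}(\ol{S}_n(f),\ol{S}_n(f')) = \mathrm{Cov}(\xi_1(f),\xi_1(f')),$$
$$n^{-1}\mathrm{Cov}(\ol{S}_n(f),Y_n(t)) \to t\,\mathrm{Cov}(\xi_1(f),\xi_1(f_c)),$$
$$n^{-1}\mathrm{Cov}(Y_n(s),Y_n(t)) \to (s\wedge t)\,\mathrm{Var}(\xi_1(f_c)).$$

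The final step is computing $\mathrm{Cov}(\xi_1(f),\xi_1(f'))$ for $f_c\le f\le f'$: conditioning on $(I_1)_+$ and using that given $(I_1)_+=m$ the random variables $C_1(f),C_1(f')$ are sums of $m$ i.i.d.\ indicators of $[0,f]$ and $[0,f']$ gives $\mathrm{Cov}(C_1(f),C_1(f'))=f(1-f')E I_+ + ff'\sigma^2(I_+)$, while $(I_1)_+(I_1)_-=0$ a.s.\ forces $C_1(f)(I_1)_-\equiv 0$ and hence $\mathrm{Cov}(C_1(f),(I_1)_-)=-f E I_+ E I_-$. Combining,
$$\mathrm{Cov}(\xi_1(f),\xi_1(f'))=f(1-f')E I_+ + ff'\sigma^2(I_+) + (f+f')E I_+ E I_- + \sigma^2(I_-),$$
which equals $E(X_\infty(f)X_\infty(f'))$ by Lemma \ref{lem:covariance}-(1); together with the $(s\wedge t)$ structure above, this reproduces parts (2) and (3) of Lemma \ref{lem:covariance}, so the limiting covariance agrees with that of $(X_\infty,Y_\infty)$ as required. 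The main effort is the covariance bookkeeping --- particularly the compound-sum calculation for $\mathrm{Cov}(C_1(f),C_1(f'))$ --- while the CLT itself is routine once the i.i.d.\ decomposition is in place.
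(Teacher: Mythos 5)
Your proposal is correct and follows essentially the same route as the paper: both identify the same centered i.i.d.\ summand $\xi_k(f)=C_k(f)-(I_k)_--(f-f_c)E I_+$ (the paper's $Z(f)$), apply a central limit theorem to its partial sums (the paper via an explicit characteristic-function product over the independent time-increments $\Delta_{n,l}$, you via Cram\'er--Wold and Lindeberg with time-dependent coefficients), and identify the limiting covariance with $E(X_\infty(f)X_\infty(f'))$ by the same conditioning-on-$(I_1)_+$ computation, invoking Lemma \ref{lem:covariance} to match the covariance of $(X_\infty,Y_\infty)$. The differences are purely bookkeeping.
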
 
   \begin{proof}
 Fix  $N\in\N$.  Let $f_1,\dots,f_N \in (f_c,1]$,  $0=:t_0<t_1< \dots < t_N =1$ and  $\theta_1,\dots,\theta_N, \eta_1,\dots,\eta_N \in \R$. 
   For $n\in\N, f \in [f_c,1]$ and $l=1,\dots,N$, let  $\Delta_{n,l} (f):= \ol{S}_{\lr{n t_l}}   (f) -\ol{S}_{\lr{n t_{l-1}}}(f)$. Observe that 
  $ \ol{S}_n (f_k) = \sum_{l=1}^N \Delta_{n,l} (f_k)$, and that the (function-valued) ``increments" $\Delta_{n,1}(\cdot),\dots,\Delta_{n,N}(\cdot)$ are independent. 
   Let 
    \begin{align*} I_n &:= \sum_{l=1}^N \eta_l \Delta_{n,l}(f_c) + \sum_{k=1}^n \theta_k \ol{S}_n (f_k)\\
   &=  \sum_{l=1}^N \eta_l \Delta_{n,l} (f_c) + \sum_{l=1}^N \sum_{k=1}^N \theta_k \Delta_{n,l}(f_k).
   \end{align*} 
      Let $Z(f):=( \sum_{0<j\le I_+}\ch_{[0,f ]}(U_j)) - I_- - (f-f_c) E I_+$. Observe that $E Z(f)=0$ and $E (\sup_{f} |Z(f)|^2) < \infty$.
       Let $V:=(Z(f_c),Z(f_1),\dots,Z(f_N))$. To simplify notation, we will refer to the $j$-th  entry of $V$ as $V(j-1)$. For example, first entry, $Z(f_c)$, is  $V(0)$, and last, $N+1$-th entry  $Z(f_N)$, is $V(N)$. Let $(V_k:k\in \N)$ be an IID sequence of copies of the random vector  $V$.  We have 
        $$(\Delta_{n,l} (f_c) ,\Delta_{n,l}(f_1), \dots, \Delta_{n,l}(f_N))\eqd \sum_{0< j \le s_l }  V_j,$$
         where $s_l : = \lr{n t_{l}}- \lr {n t_{l-1}}$. 
       Due to the independence of the increments $\Delta_{n,1}(\cdot),\dots,\Delta_{n,N}(\cdot)$ and their above representations as partial sums of IID sequences with finite second moment. Letting   $\theta^{l} := (\theta^l_0,\dots, \theta^l_N)$, where $ \theta^l_0:= \eta_l$ and $ \theta^l_j:= \eta_j$ for $j=1,\dots,N$, we obtain : 
         \begin{align*}
           E( e^{i I_n /\sqrt{n}} ) &= 
            \prod_{l=1}^N \left (  
              E e ^{\frac{i}{\sqrt{n}}\left(  \eta_l \Delta_{n,l} (f_c)  + \sum_{k=1}^N \theta_k \Delta_{n,l}(f_k)\right)}\right)\\
            & = \prod_{l=1}^N E \left ( \left ( E e^{i/\sqrt{n} \left ( \eta_l V(0)+ \sum_{k=1}^N \theta_k V(k)\right)}\right)^{s_l}  \right)\\
          &= \prod_{l=1}^N  E\left(  \left(  E e^{\frac{i}{\sqrt{n}} \theta^l \cdot V}\right)^{s_l}\right). 
         \end{align*} 
   
         Since $E(( \theta^l \cdot V)^2) <\infty$ and $\lim_{l\to\infty} s_l /l = t_l - t_{l-1}$, it follows that
         \begin{equation*}
       \lim_{n\to\infty} E \left ( e^{i I_n /\sqrt{n}} \right)= e ^{-\frac 12 \sum_{l=1}^N (t_l-t_{l-1}) E(( \theta^l \cdot V)^2) }.
        \end{equation*}  
        Therefore to prove the lemma we need to show that 
         \begin{equation}   \label{eq:charfun} \sum_{l=1}^N (t_l-t_{l-1}) E(( \theta^l \cdot V)^2)  = E \left ( \bigl( \sum_{l=1}^N  \eta_l ( Y_\infty (t_l)-Y_\infty(t_{l-1}) )+ \sum_{j=1}^N \theta_j X_\infty (f_j)\bigr) ^2 \right ) .
         \end{equation} 
  For any $0\le j\le k\le N$  and taking $f_0:=f_c$, 
           \begin{align*}
            E \left (   V(j) V(k) \right ) &= E \left ( \bigl(\sum_{0<l\le I_+} \ch_{[0,f_j]}(U_l)   - I_- - (f_j - f_c) E I_+ \bigr) \bigl( \sum_{0<l\le I_+} \ch_{[0,f_k]}(U_l)- I_--(f_k-f_c) E I_+\bigr)\right )  \\
             & = 
             E \left ( \bigl(  \sum_{0<l\le I_+} \ch_{[0,f_j]}(U_l)  \bigr) \bigl ( \sum_{0<l\le I_+} \ch_{[0,f_k]}(U_l) \bigr)\right)  + E (I_-^2) - (f_j-f_c) (f_k-f_c) (E I_+)^2 \\
              & =  f_j  E I_+ + E (I_+^2 - I_+) f_j f_k + E (I_-^2 )- (f_j-f_c)(f_k-f_c)  (E I_+)^2  \\
              & = f_j (1-f_k) E I_+ + f_j f_k \sigma^2 (I_+)  + E (I_-^2 )-f_c^2 (E I_+)^2 + f_c  (f_j +f_k)(E I_+)^2  \\
              & = f_j (1-f_k) E I_++  f_j f_k \sigma^2 (I_+)  + \sigma^2 (I_-)  + (f_j +f_k)E I_+ E I_- \\ 
              & = E ( X_\infty (f_j)  X_\infty (f_k) ).
                         \end{align*}
           Thus,
     \begin{align*}
     \label{eq:inf_composition}
      \sum_{l=1}^N (t_l-t_{l-1}) E \left ( ( \theta^l \cdot V)^2\right ) & = 
   \sum_{l=1}^N   \eta_l ^2   (t_l-t_{l-1} )E \left ( X_\infty(f_c)^2 \right )\\ 
   \nonumber
   \quad\quad & + 2  \sum_{l,j=1}^N  \eta_l  \theta_j  (t_l-t_{l-1})   E \left ( X_\infty(f_c) X_\infty (f_j)  \right)\\
   \nonumber
     \quad\quad & +  E \left (  ( \sum_{j=1}^N \theta_j X_\infty (f_j) )^2\right),
         \end{align*}
         By Lemma \ref{lem:covariance}-(3), the  first line on the right-hand side is equal to $\sum_{l=1}^N \eta_l^2 (t_l -t_{l-1}) E \left ( (Y_\infty (t_l) - Y_\infty(t_{l-1}))^2 \right)$. By Lemma \ref{lem:covariance}-(2), the second line is equal to $  2  \sum_{l,j=1}^N  \eta_l  \theta_j    E \left ( (Y_\infty (t_l) - Y_\infty(t_{l-1}) )X_\infty (f_j)  \right)$. Hence, \eqref{eq:charfun} follows. 
  \end{proof}
We need the following technical lemma, whose proof is left to the  appendix. 
\begin{lemma} 
\label{lem:product} 
Let $-\infty < a_j\le b_j <\infty,~j=1,2$ and let $(D_j,{\cal D}_j)$ denote the measure spaces given by $D ([a_j,b_j])$ equipped with the Borel $\sigma$-algebra. Suppose that $P,P_1,\dots$  is a sequence of (Borel) probability measures on $(D_1\times D_2, {\cal D}_1\times {\cal D}_2)$, satisfying 
\begin{enumerate} 
\item $(P_n:n\in\N)$ is tight. 
\item For any $N\in\N$ and $f_1,\dots,f_N\in [a_1,b_1]^N,~t_1,\dots,t_N \in [a_2,b_2]^N$ the  distribution of the marginal $ \Bigl ( \bigl(x(f_1),\dots,x(f_N)\bigr),\bigl( y(t_1),\dots,y(t_N)\bigr) \Bigr)$ under $P_n$ converges to its distribution under $P$. 
\end{enumerate}
Then $P_n \Rightarrow P$. 
\end{lemma}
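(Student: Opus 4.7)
The plan is the standard subsequence argument. By Prokhorov's theorem, the tightness hypothesis ensures that every subsequence of $(P_n)$ admits a further subsequence $(P_{n_k})$ converging weakly to some probability measure $Q$ on $D_1 \times D_2$; it then suffices to show that every such $Q$ equals $P$, which forces $P_n \Rightarrow P$.

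To identify $Q$ with $P$, I will compare their finite-dimensional distributions on a suitable ``continuity set'' of time configurations. For each $j \in \{1,2\}$, let $Q_j$ denote the $j$-th marginal of $Q$ and set
$$T_Q^{(j)}:=\{t \in [a_j,b_j] : Q_j(\{x \in D_j : x \text{ is discontinuous at } t\}) > 0\}.$$
By a standard fact (see e.g.\ \cite[Section 12]{Billin}), $T_Q^{(j)}$ is at most countable. For any $N$, any $f_1,\dots,f_N \in [a_1,b_1] \setminus T_Q^{(1)}$ and any $t_1,\dots,t_N \in [a_2,b_2] \setminus T_Q^{(2)}$, the evaluation map
$$\Pi(x,y) := \bigl(x(f_1),\dots,x(f_N),y(t_1),\dots,y(t_N)\bigr)$$
is continuous at every $(x,y)$ with $x$ continuous at each $f_i$ and $y$ at each $t_j$, hence $Q$-almost surely continuous. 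The continuous mapping theorem applied to $P_{n_k} \Rightarrow Q$ yields convergence in $\R^{2N}$ of the law of $\Pi$ under $P_{n_k}$ to the law of $\Pi$ under $Q$; hypothesis (2) yields convergence of the law of $\Pi$ under $P_n$ to the law of $\Pi$ under $P$. Hence the finite-dimensional distributions of $P$ and $Q$ coincide for every such configuration.

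To conclude, I would note that $[a_j,b_j] \setminus T_Q^{(j)}$ is dense in $[a_j,b_j]$ and that the endpoint evaluations $\pi_{a_j}, \pi_{b_j}$ are automatically $J_1$-continuous, since any Skorohod homeomorphism of $[a_j,b_j]$ fixes its endpoints; the endpoints may therefore be adjoined to the continuity set without loss. Since evaluations at a dense subset of $[a_j,b_j]$ containing both endpoints generate the Borel $\sigma$-algebra of $D_j$, and since the product of the Borel $\sigma$-algebras of the Polish spaces $D_1$ and $D_2$ equals the Borel $\sigma$-algebra of $D_1 \times D_2$, the $\pi$-$\lambda$ theorem delivers $P = Q$. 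The delicate point throughout is this interplay between the countable set of $Q$-fixed discontinuities, the continuity of endpoint evaluations in the $J_1$-topology, and the generation of the Borel $\sigma$-algebra by finite-dimensional cylinders; once these are handled, the rest is a routine continuous mapping argument.
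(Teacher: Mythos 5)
Your proposal is correct and follows essentially the same route as the paper's proof: Prokhorov plus a subsequential limit $Q$, the countable set of times where $Q$'s marginals charge discontinuities, the mapping theorem applied to the $Q$-a.s.\ continuous evaluation maps, and a $\pi$-$\lambda$ argument on finite-dimensional cylinder sets to conclude $Q=P$. The only cosmetic difference is that you spell out the endpoint-continuity and density facts that the paper subsumes by citing \cite[Theorem 12.5-(iii)]{Billin}.
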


We are ready to prove the main  theorem. 
 \begin{proof}[Proof of Theorem \ref{th:CLT}]
  Recall that  $\hat F_n (f) = \frac{ S_n (f) - M_n (f) }{X_n}$. 
Therefore on $[f_c,1]$, 
  \begin{align*}
 \hat \Delta_n (f) &= \frac{\ol{S}_n (f) + n (f-f_c) E I_+ }{X_n}- \frac{f-f_c}{1-f_c} -\frac{M_n(f)}{X_n}\\ & =\frac{\ol{S}_n (f)}{X_n}+ (f-f_c) \left ( \frac{E I_+ }{X_n/n} - \frac{1}{1-f_c}\right)-\frac{M_n(f)}{X_n }.
    \end{align*} 
     Since 
       \begin{align*} (f-f_c) \left ( \frac{E I_+ }{X_n/n} - \frac{1}{1-f_c}\right)&= 
        \frac{f-f_c}{1-f_c} \frac{n}{X_n}  \frac{ (1-f_c) n E I_+ - X_n}{n}\\
        &= \frac{f_c-f}{1-f_c} \frac{n}{X_n}\left ( \frac{\ol{S}_n (1) +M_n (1)}{n}\right), 
        \end{align*}
          We can write 
       $$  \sqrt{n}  \hat \Delta_n (f)   =
         \frac{n}{X_n} \left ( \frac{\ol{S}_n (f)}{\sqrt{n}} + \frac{f_c-f}{1-f_c} \frac{\ol{S}_n(1)}{\sqrt{n}}\right) + E_n(f),
         $$
          where 
          $$ E_n(f) : =\frac{n}{X_n} \left (  \frac{f_c-f}{1-f_c} \frac{ M_n (1)}{\sqrt{n}}- \frac{M_n(f)}{\sqrt{n}}\right).$$
          Fix $f_0\in (f_c,1]$.  Then,  
          $$ \sup_{f\in [f_0,1]} |E_n(f)| \le \frac{2 n}{X_n} \frac{\lim_{k\to\infty} |M_k(f_0)|}{\sqrt{n}}\to 0\mbox{ a.s.}$$
           As for $f=f_c$, observe that 
    $$ \sqrt{n} \hat F_n (f_c) =\frac{(S_n (f_c) - M_n (f_c))/\sqrt{n} }{X_n/n } = 
     \frac{( Y_n (1) - \inf_{t \le 1} Y_n(t))/\sqrt{n} }{X_n/n }=\frac{ \Psi(Y_n/\sqrt{n})}{X_n/n}.$$
      Let $T:D ([f_0,1]) \to D([f_0,1])$ be the measurable   mapping $(T x)(f):= x(f) + \frac{f_c-f}{1-f_c} x(1)$. Then we have 
       $$ \sqrt{n} \left (   \hat \Delta_n |_{[f_0,1]} ,\hat F_n (f_c) \right) 
       = \left ( \frac{n}{X_n}T ( \frac{\ol{S}_n }{\sqrt{n}}) + E_n , \frac{n}{X_n}\Psi (\frac{Y_n}{\sqrt{n}}) \right).$$ 
        By Lemma \ref{lem:tight}, the laws  of $\ol{S}_n|_{[f_0,1]}/\sqrt{n}$ are tight in $D([f_0,1])$. By the Invariance Principle \cite[Theorem 14.1]{Billin},  the laws of $Y_n/\sqrt{n}$ are tight  in $D([0,1])$.  Therefore the laws of  $\frac{1}{\sqrt{n}}(\ol{S}_n|_{[f_0,1]},Y_n)$ are tight in the product space $D([f_0,1])\times D([0,1])$. Since by Lemma \ref{lem:findim} the finite dimensional distributions converge to those of $(X_\infty,Y_\infty)$, we obtain from Lemma \ref{lem:product}  that 
        $$\frac{1}{\sqrt{n}}(\ol{S}_n|_{[f_0,1]},Y_n)\Rightarrow (X_\infty,Y_\infty)$$
        in $D([f_0,1])\times D([0,1])$.   In addition, since $X_\infty$ and $Y_\infty$ are continuous processes,   it follows from the Mapping Theorem \cite[Theorem 2.7]{Billin} that $ (T (\frac{\ol{S}_n}{\sqrt{n}}),\Psi(\frac{Y_n}{\sqrt{n}}))\Rightarrow ( T (X_\infty),\Psi(Y_\infty))$ in  $D([f_0,1])\times D([0,1])$.  In addition,  $L_n/n \to (1-f_c) E(I_+)$ a.s.  and $\sup_{f\in [f_0,1]} |E_n(f)|  \to 0$, a.s., therefore if we denote $  (T (\frac{\ol{S}_n}{\sqrt{n}}),\Psi(\frac{Y_n}{\sqrt{n}}))$ by ${\cal A}_n$, the corresponding weak limit $(T(X_\infty),\Psi(Y_\infty))$ by ${\cal A}_\infty$ and denote $(n/X_n,E_n)$ by ${\cal B}_n$ and it corresponding a.s. limit $(\frac{1}{(1-f_c)E I_+},0)$ by ${\cal B}_\infty$, then for any uniformly continuous $G: (D([f_0,1])\times D([0,1]))\times \R^2\to \R$, we have
         $$ E G ({\cal A}_n,{\cal B}_n) - E G ({\cal A}_\infty,{\cal B}_\infty) =  E
         \left (  G ({\cal A}_n,{\cal B}_n) - G({\cal A}_n,{\cal B}_\infty) \right ) +   E G ({\cal A}_n,{\cal B}_\infty) - E G ({\cal A}_\infty,{\cal B}_\infty).$$ 
          Since $G$ is uniformly continuous and ${\cal B}_n \to {\cal B}_\infty$, a.s., it follows that 
           $ E
         \left (  G ({\cal A}_n,{\cal B}_n) - G({\cal A}_n,{\cal B}_\infty) \right )\to 0$. Since ${\cal B}_\infty=(\frac{1}{(1-f_c)E I_+},0)$, and ${\cal A}_n\Rightarrow {\cal A}_\infty$, we have that  $E G ({\cal A}_n,{\cal B}_\infty) - E G ({\cal A}_\infty,{\cal B}_\infty)\to 0$.  Hence, $({\cal A}_n,{\cal B}_n)\Rightarrow ({\cal A}_\infty,{\cal B}_\infty)$ in $(D([f_0,1])\times D([0,1]))\times \R^2$. By Skorohod's representation  theorem \cite[Theorem 6.7]{Billin}, that there exists a probability space and random elements on it, ${\cal A}_n':=(T_n',\Psi_n'),~ {\cal B}_n':=(n/X'_n,E_n')$  such that
         $ ({\cal A}_n',{\cal B}_n')  \eqd ({\cal A}_n,{\cal B}_n)$ and  
          $\lim_{n\to\infty} ({\cal A}_n',{\cal B}_n')$   exists a.s. in $(D([f_0,1])\times D([0,1]))\times \R^2$.
          Therefore, if $G: D ( [f_0,1]\times D ( [0,1]) \to \R$ is continuous and bounded, it follows from the bounded convergence theorem that 
          \begin{align*}
           \lim_{n\to\infty} & E G \left (  \sqrt{n} \left (   \hat \Delta_n|_{[f_0,1]},\hat F_n (f_c) \right)   \right)\\
            &= \lim_{n\to\infty}   E G \left(  \frac{n}{X_n'}T_n'  +E_n', \frac{n}{X_n'}\Psi_n')\right)= E G \left( \frac{1}{E I_+} T(X_\infty), \frac{1}{E I_+} \Psi (Y_\infty) \right),
            \end{align*}
         Finally,  
         \begin{align*}
          T(X_\infty)(f)&= X_\infty(f) + \frac{f_c-f}{1-f_c} X_\infty(1) \\
           & = 
           \sqrt{EI_+} \mbox{Br}_1(f) +  ( f+  \frac{f_c-f}{1-f_c} ) \sigma (I_+) W_2'(1) + (1+ \frac{f_c-f}{1-f_c})\sigma (I_-) W_3'(1)\\
          &= \sqrt{EI_+} \mbox{Br}_1(f) +  (1-F)(f) \sigma_2 W_2(1) ,
          \end{align*}
            completing the proof. 
 \end{proof} 
 \section*{Appendix}
 \begin{proof}[Proof of Lemma \ref{lem:product}]
 Since $D_1$ and $D_2$ are complete and separable, so is $D_1\times D_2$. In addition, by assumption $(P_n:n\in\N)$ is tight, therefore it follows from Prohorov's Theorem that $(P_n:n\in\N)$ is relatively compact. In particular, in order to complete the proof it is enough to show that whenever $P_{n_1},P_{n_2},\dots$ is a convergent subsequence, say to $Q$, then $P=Q$. Suppose we have such a sequence. For $ s \in [a_j,b_j]$, let $J_j (s) = \{ z  \in D_j : z(s_-)\ne z (s)\}$. Also let $\pi_j : D_1 \times D_2 \to D_j$ be the (continuous) coordinate mappings,   defined through $\pi_1 (x,y) := x$ and $\pi_2 (x,y) :=y$.
 Since $Q \circ \pi_j ^{-1}$ is a Borel probability measure on $D_j$ is follows (e.g.  \cite[p. 138]{Billin})  that there exists a countable set, $R_j \subset (a_j,b_j)$, such that $Q \circ \pi_j ^{-1}(J_j (s))>0$ if and only if $s \in R_j$. Let ${\cal J}_j:= [a_j,b_j]\backslash R_j$. Let  $N\in\N$ and choose $\ol{f} \in {{\cal J}_1}^N$ and $\ol{t}\in {{\cal J}_2 }^N$. For $\ol{v}:=(\ol{v}_1,\dots,\ol{v}_N)  \in({\cal J}_j) ^N$ let $h_{j,\ol{v}}: D_j \to \R^N$ be the function 
  $ h(z) := ( z (\ol{v}_1),\dots, z (\ol{v}_N))$. Then $h_{1,\ol{f}}$ is continuous  $Q\circ \pi_1^{-1}$-a.s.  and $h_{2,\ol{t}}$ is continuous $Q\circ \pi_2^{-1}$-a.s. As a result, the function $h_{\ol{f},\ol{t}}:D_1\times D_2 \to \R^N \times \R^N$, which is defined through $ h_{\ol{f},\ol{t}}(x,y)  := (h_{1,\ol{f}}(x),h_{2,\ol{t}}(y))$ is $Q$-a.s. continuous. Therefore it follows from the Mapping Theorem \cite[Theorem 2.7]{Billin}, that 
  \begin{equation}
  \label{eq:QPsame}   Q \circ h_{\ol{f},\ol{t}}^{-1}= P \circ h_{\ol{f},\ol{t}}^{-1},\mbox{ if } \ol{f}\in {{\cal J}_1}^N\mbox{ and } \ol{t}\in {{\cal J}_2}^N.
  \end{equation} 
  Let 
    \begin{align*}
     {\cal P}_j  &:= \{ h_{j,\ol{v}}^{-1}(E) : \ol{v} \in {{\cal J}_j}^N \mbox{for some }N\in\N,E \subset \R^N\mbox{ is a Borel set}\},~j=1,2.
     \end{align*}
     Then  ${\cal P}_j$  is a $\Pi$-system. In addition, it follows from  \cite[Theorem 12.5-(iii)]{Billin}, that $ \sigma ({\cal P}_j)={\cal D}_j$. 
     Let ${\cal P}:= \{ A \times B\in {\cal D}_1 \times {\cal D}_2: A \in {\cal P}_1, B \in {\cal P}_2\}$. 
      Then ${\cal P}$ is again a  $\Pi$-system, and it is easy to see that $\sigma ({\cal P} ) = {\cal D}_1\times {\cal D}_2$. Furthermore,  \eqref{eq:QPsame} guarantees that $P$ and $Q$ coincide on ${\cal P}$. It follows from the $\Pi-\Lambda$ theorem that  $Q=P$. 
      \end{proof} 
 \bibliographystyle{amsalpha}
\bibliography{mybib} 

\providecommand{\bysame}{\leavevmode\hbox to3em{\hrulefill}\thinspace}
\providecommand{\MR}{\relax\ifhmode\unskip\space\fi MR }
\providecommand{\MRhref}[2]{%
  \href{http://www.ams.org/mathscinet-getitem?mr=#1}{#2}
}
\providecommand{\href}[2]{#2}
\begin{thebibliography}{BAMR11}

\bibitem[Asm82]{asmussen}
S{\o}ren Asmussen, \emph{Conditioned limit theorems relating a random walk to
  its associate, with applications to risk reserve processes and the {$GI/G/1$}
  queue}, Adv. in Appl. Probab. \textbf{14} (1982), no.~1, 143--170.

\bibitem[BAMR11]{BMR}
I.~Ben-Ari, A.~Matzavinos, and A.~Roitershtein, \emph{On a species survival
  model}, Electronic Communications in Probability \textbf{16} (2011),
  226--233.

\bibitem[Bil99]{Billin}
Patrick Billingsley, \emph{Convergence of probability measures}, second ed.,
  Wiley Series in Probability and Statistics: Probability and Statistics, John
  Wiley \& Sons Inc., New York, 1999, A Wiley-Interscience Publication.

\bibitem[BW71]{Bi_Wi}
P.~J. Bickel and M.~J. Wichura, \emph{Convergence criteria for multiparameter
  stochastic processes and some applications}, Ann. Math. Statist. \textbf{42}
  (1971), 1656--1670.

\bibitem[GMS]{GMS_2}
Herve Guiol, Fabio~P. Machado, and Rinaldo~B. Schinazi, \emph{On a link between
  a species survival time in an evolution model and the bessel distributions},
  to appear in Brazilian Journal of Probability and Statistics,
  \url{http://arxiv.org/abs/1102.2817}.

\bibitem[GMS11]{GMS}
\bysame, \emph{A stochastic model of evolution}, Markov Processes and Related
  Fields \textbf{17} (2011), 253--258, \url{http://arxiv.org/abs/0909.2108}.

\bibitem[Kal02]{Kalen}
Olav Kallenberg, \emph{Foundations of modern probability}, second ed.,
  Probability and its Applications (New York), Springer-Verlag, New York, 2002.

\bibitem[MV11]{myvol}
Skevi Michael and Stanislav Volkov, \emph{On the generalization of the
  {G}{M}{S} evolutionary model}, Markov Processes and Related Fields
  \textbf{17} (2011), 311--322, \url{http://arxiv.org/abs/1104.4003}.

\bibitem[Pyk68]{pyke}
Ronald Pyke, \emph{The weak convergence of the empirical process with random
  sample size}, Proc. Cambridge Philos. Soc. \textbf{64} (1968), 155--160.

\end{thebibliography}
\end{document}